\newcommand{\subscript}[2]{$#1 #2$}
\newtheorem{lemma}{Lemma}
\newtheorem*{theorem*}{Theorem}
\newtheorem{theorem}{Theorem}
\newtheorem{remark}{Remark}
\newtheorem{proposition}{Proposition}
\theoremstyle{definition}
\newtheorem*{proof1*}{Proof of Theorem~\ref{T1}}
\newtheorem*{proof2*}{Proof of Theorem~\ref{T2}}
\newcommand{\NN}{\mathbb{N}}
\newcommand{\ZZ}{\mathbb{Z}}
\newcommand{\RR}{\mathbb{R}}
\newcommand{\XX}{\mathfrak{X}}
\newcommand{\SSS}{\mathfrak{S}}
\newcommand{\TT}{\mathfrak{T}}
\newcommand{\UU}{\mathfrak{U}}
\newcommand{\VV}{\mathfrak{V}}
\newcommand{\YY}{\mathfrak{Y}}
\newcommand{\cc}{\mathbf{c}}
\newcommand{\CC}{\mathbf{C}}
\newcommand{\mm}{\mathbf{m}}
\newcommand{\MM}{\mathcal{M}}
\tikzset{dots/.append style={ultra thick, fill=none}}
\begin{document}
	
	\title[Maximal operators on Lorentz spaces in non-doubling setting]{Maximal operators on Lorentz spaces in non-doubling setting}
	
	\author{Dariusz Kosz}
	\address{ \newline Dariusz Kosz
		\newline Faculty of Pure and Applied Mathematics
		\newline Wroc\l aw University of Science and Technology 
		\newline Wybrze\.ze Wyspia\'nskiego 27, 50-370 Wroc\l aw, Poland
		\newline \textit{E-mail address:} \textnormal{dariusz.kosz@pwr.edu.pl}	
	}
	
	\thanks{This paper constitutes a part of PhD Thesis written in the Faculty of Pure and Applied Mathematics, Wroc\l aw University of Science and Technology, under the supervision of Professor Krzysztof Stempak.	
	}

	\begin{abstract} We study mapping properties of the centered Hardy--Littlewood maximal operator $\MM$ acting on Lorentz spaces $L^{p,q}(\XX)$ in the context of certain non-doubling metric measure spaces $\XX$. The special class of spaces for which these properties are very peculiar is introduced and many examples are given. In particular, for each $p_0, q_0, r_0 \in (1, \infty)$ with $r_0 \geq q_0$ we construct a space $\XX$ for which the associated operator $\MM$ is bounded from $L^{p_0,q_0}(\XX)$ to $L^{p_0,r}(\XX)$ if and only if $r \geq r_0$. 
	
	\medskip	
	\noindent \textbf{2010 Mathematics Subject Classification:} Primary 42B25, 46E30.
	
	\medskip
	\noindent \textbf{Key words:} centered Hardy--Littlewood maximal operator, Lorentz space, non-doubling metric measure space.
	\end{abstract}
	
	\maketitle
	
	\section{Introduction} \label{S1}
	
	Maximal operators are objects of fundamental importance in mathematics, especially in harmonic analysis. In short, their main role is to estimate from above the values of many other operators under consideration. This in turn means that the standard way of using them should be somehow related to the property that they are bounded from one function space to another. In fact, there are hundreds of works that use various types of boundedness of maximal operators.
	
	Among the whole family of the aforementioned objects, particular attention is focused on the classical Hardy--Littlewood maximal operators which are introduced in the context of an arbitrary metric measure space $\XX$ and usually appear in the literature in two versions, centered $\MM$ (see the definition in Section~\ref{S2}) and non-centered $\widetilde{\MM}$. The first remark about these operators is that they are bounded on $L^\infty(\XX)$ with constant $1$. However, to indicate any other properties, one should rather know more about the structure of $\XX$.
	
	At the beginning, let us say a few words about the classical situation in which $\XX$ is simply $\RR^n$, $n \in \NN$, equipped with Lebesgue measure and the Euclidean metric. One of the most important results obtained in this particular case is that both operators, $\MM$ and $\widetilde{\MM}$, are of weak type $(1,1)$ which means that they are bounded from $L^1(\XX)$ to $L^{1, \infty}(\XX)$. This fact has several significant consequences including the Lebesgue differentiation theorem, a famous result in real analysis. Besides, keeping in mind that the operators are sublinear one can use the Marcinkiewicz interpolation theorem to prove their strong type $(p,p)$ estimate (that is, the boundedness on $L^p(\XX)$) for each $p \in (1, \infty)$. 
	
	Further studies in this field are focused, among other things, on determining the best constants in certain inequalities with the maximal function, including the weak type $(1,1)$ inequality in the first place (see \cite{Al1, Me1, Me2}). Also some articles have been devoted to the boundedness properties of $\MM$ and $\widetilde{\MM}$ in the context of various function spaces in which the regularity of functions is measured (see \cite{BCHP, HO, Ki, Ku, Ta}). Finally, an important direction of research is to analyze what happens with the particular properties of maximal operators if the underlying metric measure space changes.
	
	The standard tools used to show the weak type $(1,1)$ estimate for maximal operators are covering lemmas. At first glance, the possibility of using them depends mainly on the metric properties of a given space. To illustrate this let us mention that in the case of $\RR^n$ with the Euclidean metric a suitable covering argument provides that $\MM$ is of weak type $(1,1)$ in the case of any ``sensible" measure (one can choose here an arbitrary Radon measure, for example). However, the situation changes significantly if only $\MM$ is replaced by $\widetilde{\MM}$. Namely, it is possible to find a measure on $\RR^n$, $n \geq 2$, for which the associated non-centered operator $\widetilde{\MM}$ is not of weak type $(1,1)$. In fact, Sj\"ogren \cite{Sj} showed that this is the case for the standard two-dimensional Gaussian measure ${\rm d}\mu(x,y) = e^{-(x^2+y^2)/2} {\rm d}x {\rm d}y$ (see also an example given by Aldaz \cite{Al2}). 
	
	The last fact suggests that the condition on $\XX$ ensuring that most of the classical theory works should rather take into account both the associated metric and measure. Indeed, in the context of arbitrary metric measure spaces, the so-called doubling condition has been extensively used. Roughly speaking, it says that the measure of a given ball $B$ is comparable to the measure of $2B$, the ball concentric with $B$ and of radius two times that of $B$. In addition to many other results, it turned out that for any doubling space $\XX$ the associated operators $\MM$ and $\widetilde{\MM}$ both satisfy the weak type $(1,1)$ estimate. There were also a few concepts regarding the possibility of replacing the doubling condition with some weaker conditions (see \cite{H}, for example) or even eliminating it at all.
	
	Nazarov, Treil, and Volberg made a great contribution to developing harmonic analysis on arbitrary metric measure spaces. Their famous work \cite{NTV} contains valuable observations on how to deal with various important problems in this field without having the doubling condition in hand. It is particularly interesting for us that the modified centered maximal operator $\MM_3$ has been introduced there (the modification is that the measure of the ball $3B$ instead of $B$ occurs in the denominator in the definition of $\MM$). The key point was that in applications $\MM_3$ can often be successfully used in place of $\MM$, while it has much better general mapping properties.  
	
	In the following years, several articles treating the weak type $(1,1)$ inequality appeared in the context of the families of modified maximal operators, $\{\MM_\kappa \colon \kappa \geq 1\}$ and $\{\widetilde{\MM}_\kappa \colon \kappa \geq 1\}$ (see \cite{Sa, St1, Te}). As a result, it turned out that for any $\XX$ such that the measure of each ball is finite the associated operators $\MM_\kappa$ and $\widetilde{\MM}_\kappa$ are of weak type $(1,1)$ for $\kappa \in [2,\infty)$ and $\kappa \in [3, \infty)$, respectively. Moreover, these ranges are sharp since it has also been shown that there exist metric measure spaces such that $\MM_\kappa$ (respectively, $\widetilde{\MM}_\kappa$) is not of weak type $(1,1)$ for each $\kappa \in [1,2)$ (respectively, for each $\kappa \in [1,3)$). The suitable examples are given in \cite{Sa, St2} (see also \cite{SS}, where certain details justifying the correctness of the construction described in \cite{Sa} are given). Some further results regarding modified maximal operators were also obtained by the author \cite{Ko3}.
	
	A slightly different branch in the study of maximal operators was indicated by the previously mentioned work \cite{Al2}. Namely, this article initiated the program of searching spaces for which the mapping properties of the associated maximal operators are very specific. H.-Q. Li wrote a series of papers (see \cite{Li1, Li2, Li3}) in which the so-called cusp spaces have been introduced for this purpose. For example, in \cite{Li2} it is shown that for each fixed $p_0 \in (1, \infty)$ there exists a space $\XX$ for which the associated operator $\MM$ is of strong type $(p, p)$ if and only if $p > p_0$. 
	
	Then, the full characterization of possibilities concerning the question ``for what ranges of the parameter $p$ the operators $\MM$ and $\widetilde{\MM}$ satisfy the weak type and strong type $(p, p)$ inequalities?" was given by the author \cite{Ko}. It is worth noting here that the examples given in \cite{Ko} were created as a result of the development of the construction introduced earlier in \cite{St2}. Moreover, the main result of \cite{Ko} was strengthened in \cite{Ko2} were also the restricted weak type $(p,p)$ inequalities (or, in other words, the boundedness from $L^{p,1}(\XX)$ to $L^{p,\infty}(\XX)$) for maximal operators were taken under consideration.  
	
	Recall that the aforementioned spaces $L^p(\XX)$, $L^{p,\infty}(\XX)$ and $L^{p,1}(\XX)$ are located on the scale of Lorentz spaces $L^{p,q}(\XX)$. Thus, the natural way to extend the area of research described in the last two paragraphs is to study the boundedness of maximal operators acting on Lorentz spaces. In the case of $\RR^n$ and the classical Lorentz spaces some results allowing to describe the mapping properties of maximal operators in a more quantitative way has already been given (see \cite{AM, Sr}). However, to the author's best knowledge, there are no specific examples in the literature showing explicitly various peculiar behaviors of these operators in this context. Therefore, the aim of the present article is to introduce an appropriate class of metric measure spaces which provides the opportunity to generate a lot of such examples. For clarity, throughout the rest of this work we deal only with the centered Hardy--Littlewood maximal operator $\MM$, but we emphasize here that very similar analysis can also be done for $\widetilde{\MM}$ instead.
	
	The structure of the paper is as follows. In Section~\ref{S2} some basic definitions and notational conventions are given. In Section~\ref{S3} we state Theorem~\ref{T1} and Theorem~\ref{T2} which are the main results of this work. Section~\ref{S4} is entirely devoted to the formulation of the $\textit{space}$ $\textit{combining}$ $\textit{technique}$ which is an improved version of the method used in \cite{Ko3} in the context of modified operators. In Section~\ref{S5} and Section~\ref{S6} we present certain construction processes and the $\textit{test}$ $\textit{spaces}$ are described. Finally, in Section~\ref{S7} the proofs of Theorem~\ref{T1} and Theorem~\ref{T2} are given. 
	
	\section{Preliminaries} \label{S2}
	
	Let $\XX = (X, \rho, \mu)$ be a metric measure space with a metric $\rho$ and a Borel measure $\mu$. Throughout this article, unless otherwise stated, we assume that $(X, \rho)$ is bounded (that is, $\rm{diam}$$(X) := \sup\{\rho(x,y) : x, y \in X\}$ is finite) and $\mu(X) < \infty$. By $B(x,s) = B_\rho(x,s)$ we denote the open ball centered at $x \in X$ with radius $s>0$. If we do not specify the center point and the radius we write simply $B$. To avoid certain notational complications we also assume that the measure of each ball is strictly positive. According to this we define the $\textit{centered}$ $\textit{Hardy--Littlewood}$ $\textit{maximal}$ $\textit{operator}$, $\MM_\XX$, by
	\begin{displaymath}
	\MM_\XX f(x) := \sup_{s > 0} \frac{1}{\mu(B(x,s))} \int_{B(x,s)} |f| \, {\rm d}\mu, \qquad x \in X,
	\end{displaymath}
	where $f \colon X \rightarrow \mathbb{C}$ is any Borel function. Let us emphasize here that, in view of the equality $\MM_\XX f  = \MM_\XX |f|$, each time we study the behavior of $\MM_\XX$ later on in this paper we restrict our attention to functions $f \geq 0$.   
	
	Now we introduce the Lorentz spaces $L^{p,q}(\XX)$. For any Borel function $f \colon X \rightarrow \mathbb{C}$ we define the distribution function $d_f \colon [0, \infty) \rightarrow [0, \infty)$ by
	\begin{displaymath}
		d_f(t) := \mu( \{ x \in X : |f(x)| > t \}  ), 
	\end{displaymath}
	and the decreasing rearrangement $f^* \colon [0, \infty) \rightarrow [0, \infty)$ by
	\begin{displaymath}
	f^*(t) := \inf \{u > 0 : d_f(u) \leq t \}.
	\end{displaymath} 
	Then for any $p \in [1, \infty)$ and $q \in [1, \infty]$ the space $L^{p,q}(\XX)$ consists of those $f$ for which the quasi-norm $\|f\|_{p,q}$ is finite, where
	\begin{displaymath}
	\|f\|_{p,q} := \left\{ \begin{array}{rl}
	p^{1/q} \Big( \int_0^\infty \big( t \, d_f(t)^{1/p} \big)^q \frac{{\rm d}t}{t}   \Big)^{1/q}& \textrm{if }  q \in [1, \infty),   \\
	\sup_{t > 0} t \, d_f(t)^{1/p} & \textrm{if }  q = \infty, \end{array} \right. 
	\end{displaymath}
	or, equivalently,
	\begin{displaymath}
	\|f\|_{p,q} := \left\{ \begin{array}{rl}
	\Big( \int_0^\infty \big( t^{1/p} \, f^*(t) \big)^q \frac{{\rm d}t}{t}   \Big)^{1/q}& \textrm{if }  q \in [1, \infty),   \\
	\sup_{t > 0} t^{1/p} \, f^*(t) & \textrm{if }  q = \infty. \end{array} \right. 
	\end{displaymath}
	The second formula is valid also for $p = \infty$ (here we use the convention $t^{1/\infty} = 1$ for $t>0$). However, it turns out that $L^{\infty,q}$ is non-trivial only if $q=\infty$ since in each of the remaining cases it contains only the zero-function. Let us also note that one could consider $L^{p,q}(\XX)$ even for the wider range $p,q \in (0, \infty]$, but this is not the case of our study. 
	
	Many observations and details concerning Lorentz spaces are included in \cite{BS}, for example. For our purposes, it is instructive that one can estimate $\|f\|_{p,q}$ very precisely by calculating the values $d_f(2^k)$, $k \in \ZZ$. Furthermore, recall that for each $p \in [1, \infty]$ the space $L^{p,p}(\XX)$ coincides with the usual Lebesgue space $L^p(\XX)$ and hence we write shortly $\|f\|_p$ instead of $\|f\|_{p,p}$. 
	
	Throughout the article $C>0$ stands for a large constant whose value may vary from occurrence to occurrence. While studying the behavior of $\MM_\XX$ acting from $L^{p,q}(\XX)$ to $L^{p,r}(\XX)$, we allow $C$ to depend on the parameters $p$, $q$, and $r$, but not on any other factors, including the underlying metric measure space. For any Borel set $A \subset X$ its indicator function is denoted by $\chi_A$. We also use the convection that $[v, v) = (v, v] = \emptyset$ and $[v,v] = \{v\}$ for any $v \in \RR \cup \{\infty\}$.
	
	\section{Main Results} \label{S3}
	The aim of this paper, as it was announced at the end of Section~\ref{S1}, is to examine mapping properties of the centered Hardy--Littlewood maximal operator acting on Lorentz spaces. More precisely, 
	we will be interested in studying inequalities of the form
	\begin{equation}\label{1}
	\|\MM_\XX f \|_{p,r} \leq c(p,q,r,\XX) \|f\|_{p,q}, \qquad f \in L^{p,q}(\XX),
	\end{equation}
	which, for various parameters $p$, $q$, and $r$, may or may not hold, depending on the structure of $\XX$. Our goal is to construct plenty of metric measure spaces in order to observe that the sets of parameters for which \eqref{1} occurs can vary in many different ways. Before doing so, we indicate the exact range of parameters that will be taken into account later on. 
	
	We say that a triple $(p,q,r)$ is $\it admissible$ if one of the conditions below is satisfied 
	\begin{itemize}
		\item $p=q=1$ and $r \in [1, \infty]$,
		\item $p \in (1, \infty)$ and $1 \leq q \leq r \leq \infty$. 
	\end{itemize}
	Then, for a fixed admissible triple $(p,q,r)$ and a metric measure space $\XX$ we denote by $\cc(p,q,r,\XX)$ the smallest constant $c(p,q,r,\XX)$ such that \eqref{1} holds 
	(if there is no such constant, then we write $\cc(p,q,r,\XX) = \infty$). It is instructive to mention here that, for fixed $p \in [1, \infty)$, the case $\cc(p,q,r,\XX) < \infty$ is easier to meet for smaller values of $q$ and bigger values of $r$.
	
	The proposed range of parameters seems to be suitable for the following reasons. Firstly, the problem is trivial if $p = \infty$. Secondly, since there are some natural (usually proper) inclusions between Lorentz spaces and the function $\MM_\XX f$ is not smaller than $f$ in most ``sensible" settings, we omit the case $r < q$. Finally, as we will see in Remark~\ref{R1} below, the case $p = 1$ and $q \in (1, \infty]$ also turns out to be outside our area of interest. We remove here the restriction that the diameter of a given space is finite.    
	
	\begin{remark} \label{R1}
		Let $\XX = (X, \mu, \rho)$ be a metric measure space such that $\mu(X) < \infty$ and for any $\epsilon>0$ there exists a Borel set $A$ with $0 < \mu(A) < \epsilon$. Then for any $q \in (1, \infty]$ and $r \in [1, \infty]$ the associated maximal operator $\MM_\XX$ does not map $L^{1,q}(\XX)$ into $L^{1, r}(\XX)$.
	\end{remark}
	
	Indeed, fix $q \in (1, \infty)$ and $r \in [1, \infty]$ (we omit the case $q = \infty$ since the thesis is the stronger the smaller value of $q$ is). Let $(A_k)_{k \in \NN}$ be a sequence of pairwise disjoint Borel subsets of $X$ such that 
\begin{displaymath}
2^{-l_k - 1} < \mu(A_k) \leq 2^{-l_k},
\end{displaymath}
where $(l_k)_{k \in \NN}$ is a sequence of positive integers satisfying $l_{k+1} \geq l_k + 2$. Define
\begin{displaymath}
f(x) = \sum_{k=1}^\infty \frac{2^{l_k}}{k} \chi_{A_k}(x)  
\end{displaymath}
and observe that
\begin{displaymath}
\| f \|_{1, q} \leq C \, \Big( \sum_{k=1}^{\infty} \Big( \frac{2^{l_k}}{k} \, \mu(A_k) \Big)^q  \Big)^{1/q} \leq C \, \Big( \sum_{k=1}^{\infty} k^{-q} \Big)^{1/q} \leq C.
\end{displaymath}
On the other hand, $\mu(X) < \infty$ implies that for any $x \in X$ we have
\begin{displaymath}
\MM_\XX f(x) \geq \frac{\|f\|_1}{\mu(X)} \geq \frac{1}{\mu(X)} \, \sum_{k=1}^{\infty} \frac{1}{2k} = \infty
\end{displaymath}
and hence $\MM_\XX f$ is not an element of $L^{1,r}(\XX)$. \smallskip

Having described the range of parameters, we can formulate the main results of this paper. Namely, we will prove two theorems stated below. 

\begin{theorem} \label{T1}
Fix an admissible triple $(p_0, q_0, r_0)$. Then
\begin{itemize}
	\item there exists a (non-doubling) metric measure space $\UU$ such that $\cc(p_0,q_0,r, \UU ) = \infty$ for $r \in [q_0, r_0]$, while $\cc(p_0,q_0,r,\UU) < \infty$ for $r \in (r_0, \infty]$,
	\item there exists a (non-doubling) metric measure space $\VV$ such that $\cc(p_0,q_0,r,\VV) = \infty$ for $r \in [q_0, r_0)$, while $\cc(p_0,q_0,r,\VV) < \infty$ for $r \in [r_0, \infty]$.	
\end{itemize}
\end{theorem}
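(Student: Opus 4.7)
The overall strategy is to exploit the two pillars announced in the introduction. Concretely, I would combine the \emph{space combining technique} from Section~\ref{S4}, which merges a countable family of bounded metric measure spaces into a single (non-doubling) space whose best constants $\cc(p,q,r,\cdot)$ are controlled by the best constants of the components, with the explicit families of test spaces introduced in Sections~\ref{S5} and~\ref{S6}, each tailored to produce a sharp transition of the mapping properties of $\MM_\XX$ at a prescribed value of $r$. Since only the parameter $r$ varies within a fixed admissible triple, the key is to engineer test spaces in which, for a given building block, the constant $\cc(p_0,q_0,r,\cdot)$ depends on $r$ in a prescribed way and to make this dependence sharp by an appropriate limiting procedure.

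For the space $\UU$ I would choose a sequence of test spaces $(\XX_n)_{n \in \NN}$ whose numerical sequence $\cc(p_0,q_0,r,\XX_n)$ remains uniformly bounded in $n$ whenever $r > r_0$ but diverges to $\infty$ as $n \to \infty$ whenever $r \in [q_0,r_0]$. After rescaling the masses according to the recipe of Section~\ref{S4} (so that the cumulative contribution across the blocks is summable on the finite side and unbounded on the infinite side), this family combines into a single $\UU$ satisfying $\cc(p_0,q_0,r,\UU) < \infty$ iff $r > r_0$. The construction of $\VV$ is parallel in spirit but uses a different parametrized family: the test spaces now have to yield uniformly bounded constants for $r \geq r_0$ and an unbounded family for $r \in [q_0, r_0)$. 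Both families should be built from finite clusters with carefully chosen radii and weights, in the spirit of the constructions used in \cite{St2, Ko, Ko2}, so that distribution functions of $\MM_{\XX_n} f$ applied to almost-extremal inputs can be estimated explicitly and matched against the Lorentz quasi-norm of $f$.

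The main obstacle is precisely the difference between the two spaces, which lives exactly at the endpoint $r = r_0$. For $\UU$ the transition has to be \emph{open}: each test space must fail at $r_0$ itself, which is typically achieved by producing a resonant sequence of functions that saturates the restricted-weak-type inequality with ever-worsening constants. For $\VV$ the transition has to be \emph{closed} at $r_0$, meaning that each block genuinely satisfies the $L^{p_0,q_0}\to L^{p_0,r_0}$ bound with a constant that is uniform across the family, while the corresponding inequality with $r<r_0$ degenerates. Designing these two opposite endpoint behaviors inside a single parametrized construction, and then verifying that the combining procedure of Section~\ref{S4} preserves the precise inclusion/exclusion of $r_0$, is the delicate point; once this is carried out, the proof of Theorem~\ref{T1} reduces to a bookkeeping of the constants $\cc(p_0,q_0,r,\XX_n)$ together with a routine application of the machinery introduced in Section~\ref{S4}, as will be carried out in Section~\ref{S7}.
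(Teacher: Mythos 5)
Your high-level architecture coincides with the paper's: combine finite test spaces via the technique of Section~\ref{S4} (Proposition~\ref{P}), choosing the blocks so that $\sup_n \cc(p_0,q_0,r,\XX_n)$ is finite exactly on the desired $r$-range. However, you explicitly identify the design of the open versus closed transition at $r=r_0$ as ``the delicate point'' and then leave it unresolved, and that is where essentially all of the content of the proof lives. For $\UU$ with $p_0>1$ and $r_0<\infty$ the paper makes Lemma~\ref{L1} quantitative by taking $m_j \approx 2^{j(p_0-1)} j^{-p_0/r_0}$, so that $\cc(p_0,q_0,r,\SSS_n)^r \approx \sum_{i\le n} i^{-r/r_0}$, which diverges in $n$ precisely for $r\le r_0$. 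The mechanism for $\VV$ is the genuinely missing idea in your proposal: one cannot obtain the closed endpoint by tuning a single sequence, because any choice making $\sum_i i^{-r/r_0}$-type sums converge at $r=r_0$ tends to make them converge slightly below $r_0$ as well. The paper instead makes the blocks depend on the block index $n$, damping by $\log(n+3)^{-p_0/r_0}$; then at $r=r_0$ the partial sums $\sum_{i\le n} i^{-1}\log(n+3)^{-1}$ stay bounded (harmonic growth is exactly cancelled), while for $r<r_0$ the partial sums grow polynomially in $n$ and overwhelm the logarithmic damping. Without some such $n$-dependent renormalization your plan for $\VV$ does not go through.

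Two further case distinctions are absent and are not mere bookkeeping. First, for $p_0=1$ the expression in Lemma~\ref{L1} degenerates (the factor $2^{jr(-1+1/p)}$ no longer decays), so the test spaces of Section~\ref{S5.1} cannot produce a finite constant for any nontrivial choice; the paper needs the separate counting-measure spaces $\SSS'_{\mm'}$ of Section~\ref{S5.2}, whose constants are governed by $\sum_j (m_j')^{-r}$ instead. Second, for $r_0=\infty$ the space $\VV$ (finite at $r=\infty$, infinite for all finite $r$) cannot be obtained from a single one-parameter family at all; the paper uses a diagonal argument over a doubly indexed family $\{\SSS_n^i\}$ built from the $r_0=r^{(i)}$ constructions with $r^{(i)}\to\infty$. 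Your proposal, as stated, would need all of these ingredients supplied before it constitutes a proof.
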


\begin{theorem} \label{T2}
Fix an admissible triple $(p_0, q_0, r_0)$ with $q_0 \in (1, \infty]$. Then there \linebreak exists a (non-doubling) metric measure space $\YY$ such that $\cc(p_0,1,r_0, \YY ) < \infty$ and \linebreak $\cc(p_0,q_0,r_0, \YY ) = \infty$. 
\end{theorem}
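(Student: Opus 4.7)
The plan is to mirror the two-stage strategy used for Theorem~\ref{T1}: construct a countable family of small non-doubling test spaces which individually realise the desired gap between $L^{p_0,1}$- and $L^{p_0,q_0}$-boundedness at an unbounded ratio, and then glue them together via the space combining technique of Section~\ref{S4} to produce a single $\YY$ that keeps the good $L^{p_0,1}\to L^{p_0,r_0}$ bound while destroying the $L^{p_0,q_0}\to L^{p_0,r_0}$ one. The admissibility constraint $r_0\geq q_0$ and the hypothesis $q_0\in(1,\infty]$ enter here precisely because for $q_0=1$ the two source spaces coincide and no such separation is possible.

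\textbf{Step 1 (Test spaces).} For each $n\in\NN$ I would construct a bounded non-doubling space $\XX_n$ and a non-negative function $f_n$ on $\XX_n$ enjoying two properties: first, $\cc(p_0,1,r_0,\XX_n)\leq K$ with $K$ independent of $n$; second, $\|f_n\|_{p_0,q_0}\leq 1$ while $\|\MM_{\XX_n}f_n\|_{p_0,r_0}\geq A_n$ with $A_n\to\infty$. The driving observation is that a step function $f_n=\sum_{k=1}^n c_k\chi_{A_k}$ supported on pairwise disjoint atoms of comparable measure with heights $c_k\sim k^{-\alpha}$ for $1/q_0<\alpha\leq 1$ has $\|f_n\|_{p_0,q_0}$ uniformly bounded in $n$, whereas its $\|f_n\|_{p_0,1}$ diverges. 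I would then design the metric on $\XX_n$ so that a ball centred at a distinguished ``root" point swallows all atoms at once, forcing $\MM_{\XX_n} f_n$ to take on a large common value on a substantial subset of $\XX_n$ and thereby inflating $\|\MM_{\XX_n} f_n\|_{p_0,r_0}$; at all remaining points of $\XX_n$ the maximal function would be controllable by elementary covering estimates that yield the uniform $\cc(p_0,1,r_0,\XX_n)\leq K$.

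\textbf{Step 2 (Combining).} I would then feed the family $\{\XX_n\}_{n\in\NN}$ into the space combining technique of Section~\ref{S4}. Granted the expected properties of that technique, the uniform bound on $\cc(p_0,1,r_0,\XX_n)$ transfers to a finite bound on $\cc(p_0,1,r_0,\YY)$, while each $f_n$ lifts into $\YY$ with its Lorentz quasi-norm and the corresponding maximal function preserved up to absolute constants. This forces $\cc(p_0,q_0,r_0,\YY)\geq A_n$ for every $n$ and hence $\cc(p_0,q_0,r_0,\YY)=\infty$.

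\textbf{Main obstacle.} The technical core of the argument sits in Step 1. One needs to arrange the masses of the atoms and the ball radii in $\XX_n$ so that, simultaneously: (i) the extremal function $f_n$ realises the $L^{p_0,q_0}$-to-$L^{p_0,r_0}$ blow-up with rate $A_n\to\infty$, and (ii) no function $h\in L^{p_0,1}(\XX_n)$ can produce an analogous blow-up. The competition between these two conditions is tight because the averaging ball that creates the blow-up for $f_n$ is also available to test functions living in $L^{p_0,1}$; the whole point is to show that the $L^{p_0,1}$ quasi-norm, being more sensitive than $L^{p_0,q_0}$ to the distribution of level sets, forbids an analogous unboundedness. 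Carefully calibrating the geometry so that this separation survives, and simultaneously verifying the non-doubling nature of $\XX_n$ and later of $\YY$, is where I expect the bulk of the work to lie.
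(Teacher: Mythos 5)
Your Step 2 and the overall architecture (a family of finite test spaces fed into Proposition~\ref{P}) coincide with the paper's proof, but Step 1 — which you correctly identify as the technical core — contains two concrete errors, one of which is fatal to the proposed design. First, the norm computation is wrong: for atoms $A_k$ of \emph{comparable} measure and heights $c_k\sim k^{-\alpha}$, the decreasing rearrangement gives $\|f_n\|_{p_0,q_0}^{q_0}\approx\sum_k k^{-\alpha q_0+q_0/p_0-1}$ and $\|f_n\|_{p_0,1}\approx\sum_k k^{-\alpha+1/p_0-1}$, so both thresholds sit at $\alpha=1/p_0$ (not $1/q_0$), and there is no $\alpha$ for which $\|f_n\|_{p_0,q_0}$ stays bounded while $\|f_n\|_{p_0,1}$ diverges. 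To open a gap of order $l$ versus $l^{1/q_0}$ between the two quasi-norms one must let the cumulative measures grow geometrically and calibrate heights so that each level contributes $\approx 1$ to $t^{1/p}f^*(t)$; this is exactly what conditions \ref{ii} and \ref{iii} ($m_{i+1}\geq 2m_ih_i$, $1\leq m_i^{1-p}h_i<2$) arrange for the function $g=\sum_i m_i^{-1}\chi_{T_i}$ in Lemma~\ref{L3}.

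Second, and more seriously, the geometry you propose — a distinguished root whose ball ``swallows all atoms at once,'' so that $\MM_{\XX_n}f_n$ takes one large common value on a big set $S$ — cannot produce the required separation. That mechanism is a rank-one averaging operator $f\mapsto\chi_S\,|B|^{-1}\int_B f$, whose $L^{p_0,1}\to L^{p_0,r_0}$ and $L^{p_0,q_0}\to L^{p_0,r_0}$ norms are comparable, because the dual element $\chi_B/|B|$ is an indicator and indicators have comparable quasi-norms in every $L^{p',t}$. Concretely, testing your own design with $h=\chi_{\bigcup_k A_k}$ and invoking Remark~\ref{R3} shows that the uniform bound $\cc(p_0,1,r_0,\XX_n)\leq K$ forces $|S|^{1/p_0}/|B|\leq K\,\mu(\bigcup_kA_k)^{1/p_0-1}$, which in turn caps $\|f_n\|_1|S|^{1/p_0}/|B|$ by $C\|f_n\|_{p_0,q_0}$ for the equal-measure step functions above — the blow-up is destroyed by the very constraint you need. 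The paper flags precisely this obstruction at the start of Section~\ref{S6} (the extremal indicator $\chi_{\{x_0\}}$ has comparable $L^{p_0,1}$ and $L^{p_0,q_0}$ norms) and escapes it by using \emph{many} central points grouped into $l$ levels: each upper-level point $x\in T^\circ_{i^*}$ owns a small ball containing exactly \emph{one} representative of each block $T_1,\dots,T_{i^*}$, so the average against $g$ is an $\ell^1$-sum of $i^*$ equal contributions (giving $\|\MM_\TT g\|_{p_0,r_0}\gtrsim l$ against $\|g\|_{p_0,q_0}\lesssim l^{1/q_0}$), while the ``exactly one representative'' combinatorics is what makes the level sets of $\MM_0\chi_E$ scale linearly in $|E|$ and yields the uniform $L^{p_0,1}$ bound. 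Without this multi-level dual structure, the construction does not go through.
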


Two more remarks are in order here. 

\begin{remark} \label{R2}
For a given space $\XX = (X, \rho, \mu)$ define $\XX' = (X, \rho', \mu')$ by letting $\rho' = C_1 \rho$ and $\mu' = C_2 \mu$ for some constants $C_1, C_2 > 0$. Then for each admissible triple $(p,q,r)$ we have $\cc(p,q,r,\XX) = \cc(p,q,r,\XX')$.
\end{remark}

Indeed, one can easily see that replacing $\rho$ with $\rho'$ does not change anything since for any $x \in X$ the families $\{ B_\rho(x,s) : s > 0 \}$ and $\{ B_{\rho'}(x,s) : s > 0 \}$ coincide. Moreover, replacing $\mu$ with $\mu'$ makes that both quasi-norms in \eqref{1} are multiplied by $C_2^{1/p}$. 

\begin{remark} \label{R3}
Fix a space $\XX$ and an admissible triple $(p,1,r)$ with $p \in (1, \infty)$. Suppose that the inequality in \eqref{1} holds with some constant $c(p,1,r, \XX)$ for all functions $f$ of the form $f = \chi_A$ where $A \subset X$ is Borel. Then there exists a numerical constant $C_3 = C_3(p,r)$ independent of the choice of $\XX$ such that the inequality in \eqref{1} holds for all $f \in L^{p,1}(\XX)$ with $C_3 \cdot c(p,1,r, \XX)$ instead of $c(p,1,r, \XX)$.  
\end{remark}

The result for $r = \infty$ is well known and can be found in the literature (see \cite[Theorem 5.3, p. 231]{BS}). Moreover, careful reading of the proof in \cite{BS} reveals that the claim follows also for $r \in [1, \infty)$.

\section{Space combining technique} \label{S4}
	
	It will be very convenient to begin our studies with the description of a specific strategy which will be often used later on. Suppose that we start with a given sequence of metric measure spaces $(\XX_n)_{n \in \NN}$ such that the behavior of the functions $(p,q,r) \mapsto \cc(p,q,r,\XX_n)$ is known. Our goal is to use the spaces $\XX_n$ to create a new space, say $\XX = (X, \rho, \mu)$, for which $\cc(p,q,r,\XX)$ is in some sense comparable to $\sup_{n \in \NN} \, \cc(p,q,r,\XX_n)$. It turns out that $\XX$ may be built in a very transparent way under the additional assumption that each of the spaces $\XX_n$ consists of finitely many elements. We present the construction of $\XX$ below. 
	
	For each $n \in \NN$ let $\XX_n = (X_n, \rho_n, \mu_n)$ be a metric measure space consisting of finitely many elements. We introduce $\rho_n'$ and $\mu_n'$ by rescaling (if necessary) $\rho_n$ and $\mu_n$, respectively, in such a way that the following conditions are satisfied:
	
		\begin{enumerate}[label=(\alph*)]
			\item \label{a} the diameter of $X_n$ with respect to $\rho_n'$ does not exceed $1$,
			\item \label{b} $2 \mu_{n+1}'(X_{n+1}) \leq \mu_n'(\{x\})$ for every $x \in X_n$ and $n \in \NN$,
			\item \label{c} $\sum_{n=1}^{\infty} \mu_{n}'(X_{n}) = 1.$
		\end{enumerate} 
	Note that the condition \ref{c} is only for purely aesthetic reasons. For each $n \in \NN$ we denote $\XX_n' = (X_n, \rho_n', \mu_n')$ and notice that, according to Remark~\ref{R2}, the functions $(p,q,r) \mapsto \cc(p,q,r,\XX_n')$ and $(p,q,r) \mapsto \cc(p,q,r,\XX_n)$ coincide. We let $X := \bigcup_{n \in \NN} X_n$, assuming that $X_{n_1} \cap X_{n_2} = \emptyset$ for any $n_1 \neq n_2$. Finally, we define the metric $\rho$ on $X$ by
	\begin{displaymath}
	\rho(x,y) := \left\{ \begin{array}{rl}
	\rho_n'(x,y) & \textrm{if }  \{x,y\} \subset X_n \textrm{ for some } n \in \NN,   \\
	2 & \textrm{otherwise,} \end{array} \right. 
	\end{displaymath} 
	and the measure $\mu$ on $X$ by
	\begin{displaymath}
	\mu(E) := \sum_{n \in \NN} \mu_n'(E \cap X_n), \qquad E \subset X.
	\end{displaymath}
	
	In the following proposition we describe the aforementioned relation between $\cc(p,q,r,\XX)$ and $\cc(p,q,r,\XX_n)$, $n \in \NN$.
	 
	\begin{proposition} \label{P}
		Let $(\XX_n)_{n \in \NN}$ be a given sequence of metric measure spaces and assume that each of them consists of finitely many elements. Define $\XX$ as above. Then for each admissible triple $(p,q,r)$ there exists a numerical constant $\CC = \CC(p,q,r)$ independent of the choice of the spaces $\XX_n$ such that
		\begin{displaymath}
		\frac{1}{\CC} \, \sup_{n \in \NN} \, \cc(p,q,r,\XX_n) \leq \cc(p,q,r,\XX) \leq \CC \,\sup_{n \in \NN} \, \cc(p,q,r,\XX_n).
		\end{displaymath}  
	\end{proposition}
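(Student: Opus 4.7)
The proof naturally splits into the lower and upper bound, both resting on the following geometric observation. Since $\rho(x,y) = 2$ whenever $x$ and $y$ lie in distinct components, while each $X_n$ has $\rho_n'$-diameter at most $1$, an open ball $B_\rho(x,s)$ centered at $x \in X_n$ is either contained in $X_n$ (when $s \leq 2$, in which case it coincides with $B_{\rho_n'}(x,s)$) or equals all of $X$ (when $s > 2$). Combined with \ref{c}, which gives $\mu(X)=1$, this yields the pointwise identity
\begin{displaymath}
\MM_\XX f(x) = \max\bigl\{\MM_{\XX_n'} f_n(x),\ \|f\|_1\bigr\}, \qquad x \in X_n,
\end{displaymath}
where $f_n := f \chi_{X_n}$.

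The lower bound is immediate: for each $n$ and each nonnegative $f_n$ supported on $X_n$, the identity gives $\MM_\XX f_n \geq \MM_{\XX_n'} f_n$ on $X_n$. Since $\mu|_{X_n} = \mu_n'$, both Lorentz quasi-norms match: $\|\MM_\XX f_n\|_{L^{p,r}(\XX)} \geq \|\MM_{\XX_n'} f_n\|_{L^{p,r}(\XX_n')}$ and $\|f_n\|_{L^{p,q}(\XX)} = \|f_n\|_{L^{p,q}(\XX_n')}$. Taking the supremum over admissible $f_n$ and using Remark~\ref{R2} gives $\cc(p,q,r,\XX) \geq \cc(p,q,r,\XX_n)$ for every $n$.

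For the upper bound, set $c := \sup_n \cc(p,q,r,\XX_n)$ and apply the identity together with the quasi-triangle inequality for $L^{p,r}$ to obtain
\begin{displaymath}
\|\MM_\XX f\|_{p,r} \leq C\bigl(\|g\|_{p,r} + \|f\|_1 \cdot \|\chi_X\|_{p,r}\bigr),
\end{displaymath}
where $g$ vanishes off $\bigsqcup_n X_n$ and $g|_{X_n} := \MM_{\XX_n'} f_n$. The constant term is easily controlled: $\|\chi_X\|_{p,r} \leq C$ by $\mu(X)=1$, while $\|f\|_1 \leq C \|f\|_{p,q}$ follows from the standard embedding $L^{p,q}(\XX) \hookrightarrow L^1(\XX)$ on a finite measure space (trivial for $p=q=1$). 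For the local part, Remark~\ref{R2} and the definition of $c$ yield $\|\MM_{\XX_n'} f_n\|_{L^{p,r}(\XX_n')} \leq c \|f_n\|_{L^{p,q}(\XX_n')}$, and I would combine these using the discretization $\|h\|_{p,r}^r \approx \sum_k 2^{kr} d_h(2^k)^{r/p}$ together with the identity $d_g(t) = \sum_n d_{\MM_{\XX_n'} f_n}(t)$ coming from disjoint supports.

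The main obstacle is this final aggregation step. Disjoint supports in Lorentz spaces do not produce a single clean inequality between the pieces and the whole (unlike in $L^p$), so one must perform a case analysis using subadditivity of $t \mapsto t^{r/p}$ when $r \leq p$ and convexity-type estimates otherwise, and similarly reconcile the exponents $q$ and $p$ after returning to $\|f\|_{p,q}$; the inclusion $\ell^q \hookrightarrow \ell^r$ guaranteed by admissibility $q \leq r$ enters here. The structural input that makes everything close is the rapid geometric decay of $\mu_n'(X_n)$ forced by \ref{b}: at every dyadic scale of $t$ only boundedly many components contribute to the respective distribution functions, which is what ultimately lets the cross-terms be absorbed into a single quasi-norm with a constant depending only on $(p,q,r)$.
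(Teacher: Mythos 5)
Your overall route is the same as the paper's: the same description of the balls, the same pointwise identity $\MM_\XX f=\max\{\MM_{\rm loc}f,\|f\|_1\}$, the same extension argument for the lower bound, and the same treatment of the global term via $L^{p,q}(\XX)\hookrightarrow L^1(\XX)$ on a probability space. All of that is correct. The problem is that the local term --- which you yourself label ``the main obstacle'' --- is precisely where the content of the proposition lies, and you leave it as a sketch whose key claim is not quite right. Condition \ref{b} does \emph{not} say that at each dyadic level $t$ only boundedly many components contribute to the distribution functions; arbitrarily many components may contribute. What \ref{b} gives is that the \emph{first} contributing component dominates in measure: if $n(t)$ is the least $n$ with $d_{\MM_{\XX_n}f_n}(t)>0$, then the mass of any single atom of $X_{n(t)}$ already exceeds $\sum_{n>n(t)}\mu_n'(X_n)$, whence $d_{\MM_{\rm loc}F}(t)\le 2\,d_{\MM_{\XX_{n(t)}}f_{n(t)}}(t)\le 2\max_n d_{\MM_{\XX_n}f_n}(t)$, and likewise $\sum_n d_{f_n}(t)^{q/p}\le C\,d_{f_{n(t)}}(t)^{q/p}\le C\,d_F(t)^{q/p}$. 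These two inequalities are what make the aggregation close: the first lets you replace $d_{\MM_{\rm loc}F}(t)^{r/p}$ by $\sum_n d_{\MM_{\XX_n}f_n}(t)^{r/p}$, you then apply $\cc(p,q,r,\XX_n)$ componentwise and use $\ell^{r/q}$-subadditivity (here $q\le r$ enters, as you note), and the second inequality converts $\big(\sum_n\|f_n\|_{p,q}^q\big)^{1/q}$ back into $C\|F\|_{p,q}$. Without the second inequality this last step genuinely fails: for $q<p$ one has $\sum_n d_{f_n}(t)^{q/p}\ge d_F(t)^{q/p}$ by concavity, so disjointness alone pushes the estimate in the wrong direction and \ref{b} is indispensable exactly here, not merely ``structural input.''

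So the skeleton is right and matches the paper, but the proof is incomplete at its crux: you need to state and verify the two displayed consequences of \ref{b} above and then run the case analysis you allude to. Two smaller points: the case $r=\infty$ cannot be handled by the discretization $\|h\|_{p,r}^r\approx\sum_k 2^{kr}d_h(2^k)^{r/p}$ and needs the (easy) separate argument $\|\MM_{\rm loc}F\|_{p,\infty}\le C\sup_n\|\MM_{\XX_n}f_n\|_{p,\infty}$; and the upper bound should also record that $\sup_n\cc(p,q,r,\XX_n)\ge 1/C$ (tested on $\chi_{X_1}$), since otherwise the additive constant from the global term cannot be absorbed into a purely multiplicative bound of the stated form.
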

	
	\begin{proof}
		Note that the process of rescaling metrics and measures, which was used in the construction of $\XX$, does not affect the studied mapping properties of the associated maximal operators. Thus, without any loss of generality, we simply assume that the family $\{ \XX_n : n \in \NN\}$ satisfies the conditions \ref{a}--\ref{c}. 
		
		Let us fix an admissible triple $(p, q, r)$. First we show that 
		\begin{displaymath}
		\sup_{n \in \NN} \, \cc(p,q,r,\XX_n) \leq \cc(p,q,r,\XX).
		\end{displaymath}
		Indeed, assume that $\cc(p,q,r,\XX) < \infty$ and notice the following observation. If we take $f \in L^{p,q}(\XX_n)$ for some $n \in \NN$ and next we extend $f$ to $F \in L^{p,q}(\XX)$ by setting $F(x)=0$ for $ x \in X \setminus X_n$, then $\|F\|_{p,q} = \|f\|_{p,q}$ (here the symbol $\| \cdot \|_{p,q}$ refers to function spaces over different measure spaces). Moreover, in view of $(\rm a)$, we have $\MM_{\XX} F(x) = \MM_{\XX_n} f(x)$ for any $x \in X_n$. Consequently, the inequality $\|\MM_{\XX_n} f\|_{p,r} \leq  \cc(p,q,r,\XX) \|f\|_{p,q}$ follows from $\|\MM_{\XX} F\|_{p,r} \leq  \cc(p,q,r,\XX) \|F\|_{p,q}$.
		
		Next we show the estimate
		\begin{displaymath}
		\cc(p,q,r,\XX) \leq \CC \, \sup_{n \in \NN} \, \cc(p,q,r,\XX_n).
		\end{displaymath}
		It is worth noting here that
		\begin{displaymath}
		\sup_{n \in \NN} \, \cc(p,q,r,\XX_n) \geq \frac{1}{C},
		\end{displaymath}
		which can easily be shown by taking $f = \chi_{X_1} \in L^{p,q}(\XX_1)$. Now we fix $F \in L^{p,q}(\XX)$ and define $f_n \in L^{p,q}(\XX_n)$, $n \in \NN$, by restricting $F$ to $X_n$. By \ref{a} and \ref{c} we have
		\begin{displaymath}
		\MM_{\XX} F = \max \{\MM_{\rm loc} F, \MM_{\rm glob} F \},
		\end{displaymath}
		where $\MM_{\rm loc} F(x) := \MM_{\XX_n} f_n(x)$ for each $x \in X_n$, $n \in \NN$, and $\MM_{\rm glob} F(x) := \|F\|_1$ is a constant function. Hence, we can write
		\begin{displaymath}
		\|\MM_{\XX} F\|_{p,r} \leq C \, \big( \| \MM_{\rm loc} F\|_{p,r} + \| \MM_{\rm glob} F\|_{p,r} \big).
		\end{displaymath}
		To estimate $\| \MM_{\rm glob} F\|_{p,r}$ we proceed in the following way. If $q = 1$, then
		\begin{displaymath}
		\| \MM_{\rm glob} F\|_{p,r} \leq C \, \|F\|_1 = C \, \int_0^1 F^*(t) \, {\rm d}t \leq C \, \int_0^1 t^{1/p} \, F^*(t) \, \frac{{\rm d}t}{t} = C \, \|F\|_{p,1}.
		\end{displaymath}
		Next, if $q \in (1, \infty)$, then $p \in (1, \infty)$ and applying H\"older's inequality we obtain
		\begin{align*}
		\| \MM_{\rm glob} F\|_{p,r} \leq C \, \|F\|_1 
		& \leq C \, \Big( \int_0^1 \big( t^{1/p} \, F^*(t) \big)^q \, \frac{{\rm d}t}{t} \Big)^{1/q} \Big( \int_0^1 t^{(1/q - 1/p) \, q'} \, {\rm d}t \Big)^{1/q'} \\
		& \leq C \, \|F\|_{p,q} \, \Big( \int_0^1 t^{(1/q - 1/p) \, q'} \, {\rm d}t \Big)^{1/q'},
		\end{align*}
		where $q'$ satisfies $1/q + 1/q' = 1$. Since $(1/q - 1/p) \, q' > -1$ for $p  \in  (1, \infty)$, we are done. Finally, if $q = \infty$, then $p  \in  (1, \infty)$ and $r = \infty$, and therefore
		\begin{align*}
		\| \MM_{\rm glob} F\|_{p,\infty} = \|F\|_1 
		 \leq \sum_{k=1}^{\infty} F^*(2^{-k}) \, 2^{-k} 
		  = \sum_{k=1}^{\infty} F^*(2^{-k}) \, 2^{-k(1/p + 1/p')}
		  \leq \|F\|_{p, \infty} \, \sum_{k=1}^{\infty}  2^{-k/p'},
		\end{align*}
		where $p'$ satisfies $1/p + 1/p' = 1$.
		
		Now it remains to estimate $\| \MM_{\rm loc} F\|_{p,r}$. First we consider the case $r \neq \infty$. Suppose that $d_{\MM_{\rm loc} F}(t) > 0$ for some $t > 0$. Then \ref{b} implies that
		\begin{displaymath}
		2 \max_{n \in \NN} d_{\MM_{\XX_n} f_n}(t) \geq d_{\MM_{\rm loc} F}(t),
		\end{displaymath}
		and hence
		\begin{displaymath}
		\|\MM_{\rm loc} F\|_{p,r} \leq C \, \Big( \int_0^\infty \big(  t \, d_{\MM_{\rm loc} F}(t)^{1/p}  \big)^r \frac{{\rm d}t}{t}  \Big)^{1/r} \leq C \, \Big( \sum_{n=1}^{\infty }\int_0^\infty \big( t \,  d_{\MM_{\XX_n}f_n}(t)^{1/p} \big)^r \frac{{\rm d}t}{t}  \Big)^{1/r}.
		\end{displaymath}
		From the definition of $\cc(p,q,r,\XX_n)$, $n \in \NN$, we have
		\begin{align*}
		& \Big( \sum_{n=1}^{\infty }\int_0^\infty \big( t \, d_{\MM_{\XX_n}f_n}(t)^{1/p}  \big)^r \frac{{\rm d}t}{t}  \Big)^{1/r} \\
		 & \qquad \qquad \leq C \, \sup_{n \in \NN} \, \cc(p,q,r,\XX_n) \, \Big( \sum_{n=1}^{\infty } \Big( \int_0^\infty \big(  t \, d_{f_n}(t)^{1/p} \big)^q \frac{{\rm d}t}{t} \Big)^{r/q} \Big)^{1/r} \\
		 & \qquad \qquad \leq C \, \sup_{n \in \NN} \, \cc(p,q,r,\XX_n) \, \Big( \sum_{n=1}^{\infty } \int_0^\infty \big( t \, d_{f_n}(t)^{1/p} \big)^q \frac{{\rm d}t}{t} \Big)^{1/q} \\
		 & \qquad \qquad =  C \, \sup_{n \in \NN} \, \cc(p,q,r,\XX_n)\, \Big( \int_0^\infty t^q \Big( \sum_{n=1}^{\infty}  d_{f_n}(t)^{q/p} \Big) \frac{{\rm d}t}{t} \Big)^{1/q},
		\end{align*}
		where in the second inequality we used the fact that $q \leq r$. Applying \ref{b} again, we deduce that if $\sum_{n=1}^{\infty}  d_{f_n}(t)^{q/p} > 0$, then
		\begin{displaymath}
		\sum_{n=1}^{\infty}  d_{f_n}(t)^{q/p} \leq C d_{f_{n(t)}}(t)^{q/p},
		\end{displaymath}
		where $n(t) := \min\{n \in \NN : d_{f_n}(t) > 0 \}$. Set $t_0 := 0$ and for $n \in \NN$ define
		\begin{displaymath}
		t_n := t_{n-1} \vee \max\{\MM_{\rm loc} f(x) : x \in X_n\}.
		\end{displaymath}
		Since for each $n \in \NN$ and $t > 0$ we have $d_{f_n}(t) \leq d_{F}(t)$, we conclude that
		\begin{displaymath}
		\Big( \int_0^\infty t^q \Big( \sum_{n=1}^{\infty}  d_{f_n}(t)^{q/p} \Big) \frac{{\rm d}t}{t} \Big)^{1/q} \leq C \, \Big( \sum_{n=1}^{\infty } \int_{t_{n-1}}^{t_n} \big( t \, d_{f_n}(t)^{1/p} \big)^q \frac{{\rm d}t}{t} \Big)^{1/q} \leq C \, \|F\|_{p,q}. 
		\end{displaymath}
		Finally, consider the case $r = \infty$. By using \ref{b} we have
		\begin{align*}
		\|\MM_{\rm loc} F\|_{p, \infty} = \sup_{t > 0} \, t \, d_{\MM_{\rm loc} F}(t)^{1/p} 
		& \leq C \, \sup_{t > 0} \, \sup_{n \in \NN} \, t \, d_{\MM_{\XX_n} f_n}(t)^{1/p} \\
		& = C \, \sup_{n \in \NN} \, \|\MM_{\XX_n} f_n\|_{p, \infty} \\
		& \leq C \, \sup_{n \in \NN} \, \Big( \cc(p,q,\infty,\XX_n) \|f_n\|_{p,q} \Big) \\
		& \leq C \, \sup_{n \in \NN} \, \cc(p,q,\infty,\XX_n) \, \|F\|_{p,q}, 
		\end{align*}
		which completes the proof.	
	\end{proof}
	
	Let us note here that whenever we want to apply Proposition~\ref{P} later on, we omit the details related to the proper indexing of the component spaces. The only important fact is that each time we use countably many spaces.
	
	At the end of this section we indicate that each space $\XX$ obtained by using Proposition~\ref{P} is non-doubling. Indeed, fix $\epsilon > 0$ and let $n_0 = n_0(\epsilon)$ be such that $\mu(X_{n_0}) < \epsilon$. Then for any $x \in X_{n_0}$ we have $B(x,3/2) = X_{n_0}$ which implies $\mu(B(x,3/2)) < \epsilon$, while $\mu(B(x,3)) = \mu(X)$.
	
	\section{Test spaces of first type} \label{S5}
	
	In Section~\ref{S5} and Section~\ref{S6} we consider auxiliary structures called test spaces. 
	Each test space is a system of finitely many points equipped with a metric measure structure. Hence, we can use it as a component space in Proposition~\ref{P}. The spaces constructed in Section~\ref{S5} are used in the proof of Theorem~\ref{T1}, while the ones described in Section~\ref{S6} appear in the proof of Theorem~\ref{T2}. From now on we write $|E|$ instead of $\mu(E)$ for any Borel set $E$. 
	
	\subsection{Test spaces of first type for $\bf p > 1$} \label{S5.1}
	Fix $l \in \NN$ and take a non-decreasing sequence of positive integers $\mm = \mm(l) = (m_1, \dots, m_l) \in \NN^{l}$. Denote $M_0 = 0$ and $M_j = \sum_{i=1}^j m_i$ for $j = 1, \dots, l$. We introduce a test space of first type $\SSS = \SSS_{\mm} = (S, \rho, \mu)$ as follows. Set $S := \{x_0, x_1, \dots, x_{M_l}\}$, where all points $x_i$ are pairwise different. Define $\rho$ by letting 
	\begin{displaymath}
	\rho(x,y) := \left\{ \begin{array}{rl}
	0 & \textrm{if } x = y, \\
	1 & \textrm{if } x \neq y \textrm{ and } x_0 \in \{x,y\}, \\
	2 & \textrm{otherwise.} \end{array} \right.
	\end{displaymath} 
	Finally, define $\mu = \mu_\mm$ by letting $|\{x_0\}| := 1$ and $|\{x_i\}| := 2^j$ for each $M_{j-1} < i \leq M_j$, $j=1, \dots, l$. 
	
	Figure~\ref{F1} shows a model of the space $\SSS$. The solid line between two points indicates that the distance between them equals $1$. Otherwise the distance equals $2$.  
	
	\begin{figure}[H]
		\begin{tikzpicture}
		[scale=.7,auto=left,every node/.style={circle,fill,inner sep=2pt}]
		
		\node[label={[yshift=-1cm]$x_0$}] (m0) at (8,1) {};
		\node[label=$x_{1}$] (m1) at (5,3)  {};
		\node[label=$x_{2}$] (m2) at (6.5,3)  {};
		\node[label={[yshift=-0.30cm]$x_{M_l-1}$}] (m3) at (9.5,3)  {};
		\node[label={[yshift=-0.14cm]$x_{M_l}$}] (m4) at (11,3)  {};
		\node[dots, scale=2] (m5) at (8,3)  {...};
		
		\foreach \from/\to in {m0/m1, m0/m2, m0/m3, m0/m4}
		\draw (\from) -- (\to);
		\end{tikzpicture}
		\caption{The model of the space $\SSS$.}
		\label{F1}
	\end{figure}
	Note that we can explicitly describe any ball:
	\begin{displaymath}
	B(x_0,s) = \left\{ \begin{array}{rl}
	\{x_0\} & \textrm{for } 0 < s \leq 1, \\
	S & \textrm{for } 1 < s, \end{array} \right.
	\end{displaymath} 
	\noindent and, for $i \in \{1, \dots, M_l\}$,
	\begin{displaymath}
	B(x_i,s) = \left\{ \begin{array}{rl}
	\{x_i\} & \textrm{for } 0 < s \leq 1, \\
	\{x_0, x_i\} & \textrm{for } 1 < s \leq 2,  \\
	S & \textrm{for } 2 < s. \end{array} \right.
	\end{displaymath}
	
	In the following lemma we express the behavior of $\cc(p,q,r, \SSS)$ in terms of $\mm$.  
	
	\begin{lemma} \label{L1}
		Let $\SSS = \SSS_\mm$ be the metric measure space defined as above. Then for each admissible triple $(p,q,r)$ there is a numerical constant $\CC_1 = \CC_1(p, q, r)$ independent of the choice of the sequence $\mm$ such that: if $r \in [1, \infty)$, then
		\begin{displaymath}
		\frac{1}{\CC_1} \, \Big( \sum_{j=1}^{l} 2^{jr(-1 + 1/p)} \, m_j^{r/p}    \Big)^{1/r} \leq \cc(p,q,r,\SSS) \leq \CC_1 \Big( \sum_{j=1}^{l} 2^{jr(-1 + 1/p)} \, m_j^{r/p}    \Big)^{1/r},
		\end{displaymath} 
		and, if $r = \infty$, then
		\begin{displaymath}
		\frac{1}{\CC_1} \, \sup_{j= \in \{1, \dots, l\}} 2^{j(-1 + 1/p)} \, m_j^{1/p}  \leq \cc(p,q,r,\SSS) \leq \CC_1 \sup_{j \in \{1, \dots, l\}} 2^{j(-1 + 1/p)} \, m_j^{1/p}.
		\end{displaymath} 
	\end{lemma}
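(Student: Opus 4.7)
The strategy is to derive the lower bound by testing $\MM_\SSS$ on the single function $f_0 := \chi_{\{x_0\}}$, and the upper bound by decomposing any $f \ge 0$ as $f = f(x_0) \chi_{\{x_0\}} + g$ with $g(x_0) = 0$, applying sublinearity, and estimating each piece separately. The role of the non-decreasing hypothesis on $\mm$ will be to force the two sides of the asymptotic to match.

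For the lower bound, I inspect the three possible ball averages at $x_i \ne x_0$: the self-ball gives $0$, the ball $\{x_0, x_i\}$ gives $\frac{1}{1+2^j}$ (when $x_i$ lies in layer $j$), and the whole-space ball gives $\frac{1}{|S|}$. Hence $\MM_\SSS f_0(x_i) \asymp 2^{-j}$. Consequently, for $t$ slightly smaller than $2^{-j_0}$ the distribution function $d_{\MM_\SSS f_0}(t)$ is at least $m_{j_0} 2^{j_0}$ (the measure of layer $j_0$ alone). Inserting this into the discrete expression $\|h\|_{p,r}^r \asymp \sum_k 2^{kr} d_h(2^k)^{r/p}$ (and analogously for $r = \infty$), and recalling that $\|f_0\|_{p,q} = 1$, produces the claimed lower bound on $\cc(p,q,r,\SSS)$.

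For the upper bound, set $a := f(x_0)$. Since $\|a\chi_{\{x_0\}}\|_{p,q} = a$ and $a\chi_{\{x_0\}} \le f$ pointwise, monotonicity of the Lorentz quasi-norm gives $a \le \|f\|_{p,q}$. Hence the first piece contributes at most $a \cdot \|\MM_\SSS \chi_{\{x_0\}}\|_{p,r}$, and repeating the computation above as an upper bound yields $\|\MM_\SSS \chi_{\{x_0\}}\|_{p,r}^r \le C \sum_{j_0} 2^{-j_0 r} (\sum_{j \le j_0} m_j 2^j)^{r/p}$. Invoking monotonicity of $\mm$, I get $\sum_{j \le j_0} m_j 2^j \le 2^{j_0+1} m_{j_0}$, which collapses the right-hand side to $C \sum_j 2^{jr(-1+1/p)} m_j^{r/p}$, as required. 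For the piece $g$, the key observation is that, since $g(x_0) = 0$, the average of $g$ over $\{x_0, x_i\}$ equals $\frac{2^j g(x_i)}{1+2^j} \le g(x_i)$, so $\MM_\SSS g(x) \le \max(g(x), \|g\|_1/|S|)$ everywhere. This gives $\|\MM_\SSS g\|_{p,r} \le C(\|g\|_{p,r} + \|g\|_1 |S|^{-1+1/p})$. The first summand is controlled by $\|g\|_{p,q}$ via the Lorentz inclusion $L^{p,q} \hookrightarrow L^{p,r}$ (valid since $q \le r$), and the second by $\|g\|_{p,q}$ via H\"older's inequality on Lorentz spaces, which yields $\|g\|_1 \le C\|g\|_{p,q} |S|^{1-1/p}$ (for $p = q = 1$ this reduces to the trivial identity). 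Combining the two pieces, $\|\MM_\SSS f\|_{p,r} \le C \|f\|_{p,q} \bigl[1 + (\sum_j 2^{jr(-1+1/p)} m_j^{r/p})^{1/r}\bigr]$; the additive $1$ is absorbed since the sum is bounded below by $2^{r(-1+1/p)} m_1^{r/p} \ge c_{p,r} > 0$, and analogously for $r = \infty$.

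The main obstacle is the combinatorial collapse $\sum_{j \le j_0} m_j 2^j \asymp m_{j_0} 2^{j_0}$, where the non-decreasing hypothesis on $\mm$ is indispensable; everything else reduces to standard Lorentz-space tools (embeddings, H\"older, and pointwise monotonicity of the quasi-norm).
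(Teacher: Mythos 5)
Your proposal is correct and follows essentially the same route as the paper: the paper bounds $\MM_\SSS f$ pointwise by $\max\{f,\,2\MM_0 f,\,\MM_{\rm glob}f\}$ with $\MM_0 f(x_i)=f(x_0)/2^j$, which is exactly your splitting $f=f(x_0)\chi_{\{x_0\}}+g$ in operator form, and it likewise uses $q\le r$ for the local piece, H\"older on Lorentz spaces for the global piece, monotonicity of $\mm$ to collapse $\sum_{j\le j_0}m_j2^j\asymp m_{j_0}2^{j_0}$, and the test function $\chi_{\{x_0\}}$ for the lower bound (where you in fact supply more detail than the paper, which omits that computation).
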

	
	\begin{proof}
	Fix an admissible triple $(p,q,r)$. 
	First we estimate $\cc(p,q,r,\SSS)$ from above. It is worth noting here that if $r \in [1, \infty)$, then
	\begin{displaymath}
	\Big( \sum_{j=1}^{l} 2^{jr(-1 + 1/p)} \, m_j^{r/p} \Big)^{1/r} \geq \sup_{j \in \{1, \dots, l\}} 2^{j(-1 + 1/p)} \, m_j^{1/p} \geq \frac{1}{2}.
	\end{displaymath}
	Take $f \in L^{p,q}(\SSS)$ such that $\|f\|_{p,q} = 1$. One can easily check that
	\begin{displaymath}
	\MM_{\SSS} f \leq \max \{f, 2 \MM_0 f, \MM_{\rm glob} f\},
	\end{displaymath}
	where $\MM_0 f(x_0) := 0$ and $\MM_0 f(x_i) := f(x_0) / 2^j$ for $M_{j-1} < i \leq M_j$, $j=1, \dots, l$, while $\MM_{\rm glob} f(x) := \|f\|_1 / |S|$ is a constant function. Therefore, we can write
	\begin{displaymath}
	\|\MM_{\SSS} f\|_{p,r} \leq C \, \big( \|f\|_{p,r} +  \| \MM_0 f\|_{p,r} + \| \MM_{\rm glob} f\|_{p,r} \big).
	\end{displaymath}
	Since $q \leq r$, we have $\|f\|_{p,r} \leq C \|f\|_{p,q} = C$. The inequality $\| \MM_{\rm glob} f\|_{p,r} \leq C \|f\|_{p,q}$ can be obtained in the same way as in the proof of Proposition~\ref{P} (the fact that $|S| \neq 1$ is irrelevant here). Thus, it remains to estimate $\| \MM_0 f\|_{p,r}$. Note that $\|f\|_{p,q} = 1$ implies that $f(x_0) \leq (q/p)^{1/q}$ if $q \in [1, \infty)$ and $f(x_0) \leq 1$ if $q=\infty$. We consider only $q \in [1, \infty)$ and the remaining case $q=\infty$ can be treated very similarly. Since $\mm$ is non-decreasing, we have 
	\begin{displaymath}
	d_{\MM_0 f} \Big( \Big(\frac{q}{p} \Big)^{1/q} \, 2^{-j-1} \Big) \leq \left\{ \begin{array}{rl}
	0 & \textrm{for } j \leq 0, \\
	m_j 2^{j+1}  & \textrm{for } j = 1, \dots, l-1,  \\
	m_l 2^{l+1} & \textrm{for } j \geq l, \end{array} \right.
	\end{displaymath} 
	which implies that
	\begin{displaymath}
	\| \MM_0 f\|_{p,r} \leq C \, \Big(  \sum_{j \in \ZZ} \Big( d_{\MM_0 f} \Big( \Big(\frac{q}{p} \Big)^{1/q} \, 2^{-j-1} \Big) \Big)^{r/p} \, 2^{-jr} 
	\Big) ^{1/r} \leq C \,\Big( \sum_{j=1}^{l} 2^{jr(-1 + 1/p)} \, m_j^{r/p}    \Big)^{1/r}
	\end{displaymath}
	in the case $r < \infty$, and
		\begin{displaymath}
		\| \MM_0 f\|_{p,r} \leq C \, \sup_{j \in \ZZ} \Big( d_{\MM_0 f} \Big( \Big(\frac{q}{p} \Big)^{1/q} \, 2^{-j-1} \Big) \Big)^{1/p} \, 2^{-j} 
		\leq C \, \sup_{j \in \{1, \dots, l\} } 2^{j(-1 + 1/p)} \, m_j^{1/p}
		\end{displaymath}
	in the case $r = \infty$. 
	Finally, to obtain the reverse inequality from the thesis it suffices to take $f_0 := \chi_{\{x_0\}}$ and calculate $\| \MM_\SSS f_0 \|_{p,r}$. We omit the details here.
	\end{proof}
	
	Before we introduce further constructions of spaces let us look at the expression
	\begin{equation}\label{2}
	\Big( \sum_{j=1}^{l} 2^{jr(-1 + 1/p)} \, m_j^{r/p}    \Big)^{1/r}
	\end{equation}
	that appears in the thesis of Lemma~\ref{L1}. Observe that if $p \in (1, \infty)$, then the factor $ 2^{jr(-1 + 1/p)}$ tends rapidly to $0$ as $j$ tends to $\infty$. Thus, for example, given $r_0 \in [1, \infty)$ we can find a non-decreasing sequence of positive integers $(m_j)_{j \in \NN}$ such that the series in \eqref{2} is uniformly bounded in $l$ if and only if $r > r_0$. Unfortunately, this idea does not work for $p = 1$ and hence we consider this case separately.
	
	\subsection{Test spaces of first type for $\bf p=1$} \label{S5.2}
	Fix $l \in \NN$ and take a non-decreasing sequence of positive integers $\mm' = \mm'(l) = (m'_1, \dots, m'_l)$ with $m'_1 = 1$. Next, associate with $\mm'$ an increasing sequence of positive integers $(h_1, \dots, h_j)$ such that
	\begin{equation}\label{3}
	\lfloor 2^{h_{j+1}} / m_{j+1}' \rfloor > 2^{h_j}, 
	\end{equation}
	holds for each $j = 1, \dots, l-1$ (here and later on the symbol ${\lfloor} \cdot {\rfloor}$ refers to the floor function). We introduce a test space of first type $\SSS' = \SSS'_{\mm'} = (S, \rho, \mu)$ as follows. Set 
	\begin{displaymath}
	S := \{x_0\} \cup \{x_{j,k} : k = 1, \dots, 2^{h_j}, \ j=1, \dots, l\}, 
	\end{displaymath}
	where all elements are pairwise different. We use some auxiliary symbols for certain subsets of $S$. Namely, we set $\tilde{S}_0 := \emptyset$ and $S_{l+1} := \emptyset$, and denote 
	\begin{displaymath}
	S_j := \{x_{j,k} : k = 1, \dots, 2^{h_j}  \}, \quad \tilde{S}_j := \{x_{j,k} : k = \lfloor 2^{h_j} / m_{j}' \rfloor + 1, \dots, 2^{h_j}  \},
	\end{displaymath}
	for $j = 1, \dots, l$ (notice that if $m_j'= 1$ for some $j$, then $\tilde{S}_j = \emptyset$). Then we define the metric $\rho$ determining the distance between two different elements $x, y \in S$ by the formula
	\begin{displaymath}
	\rho(x,y) := \left\{ \begin{array}{rl}
	1 & \textrm{if } x_0 \in \{x,y\} \textrm{ or } \{x,y\} \in \tilde{S}_{j-1} \cup S_j \textrm{ for some } j \in \{1, \dots, l\},  \\
	2 & \textrm{otherwise.} \end{array} \right. 
	\end{displaymath}
	Finally, we let $\mu$ be counting measure.
	
	Again we can explicitly describe any ball:
	\begin{displaymath}
	B(x_0,s) = \left\{ \begin{array}{rl}
	\{x_0\} & \textrm{for } 0 < s \leq 1, \\
	S & \textrm{for } 1 < s, \end{array} \right.
	\end{displaymath} 
	\noindent for $k = 1, \dots, \lfloor 2^{h_j}/m_j' \rfloor$, $j \in \{1, \dots, l\}$,
	\begin{displaymath}
	B(x_{j,k},s) = \left\{ \begin{array}{rl}
	\{x_{j,k}\} & \textrm{for } 0 < s \leq 1, \\
	\{x_0\} \cup \tilde{S}_{j-1} \cup S_j & \textrm{for } 1 < s \leq 2,  \\
	S & \textrm{for } 2 < s, \end{array} \right.
	\end{displaymath}
	\noindent and, for $k = \lfloor 2^{h_j}/m_j' \rfloor + 1, \dots, 2^{h_j}$, $j \in \{1, \dots, l\}$,
	\begin{displaymath}
	B(x_{j,k},s) = \left\{ \begin{array}{rl}
	\{x_{j,k}\} & \textrm{for } 0 < s \leq 1, \\
	\{x_0\} \cup \tilde{S}_{j-1} \cup S_j \cup S_{j+1}& \textrm{for } 1 < s \leq 2,  \\
	S & \textrm{for } 2 < s. \end{array} \right.
	\end{displaymath}
	
	In the following lemma we express the behavior of $\cc(1,1,r, \SSS')$ in terms of $\mm'$.  
	
	\begin{lemma} \label{L2}
		Let $\SSS'_{\mm'}$ be the metric measure space defined as above. Then for each $r \in [1, \infty]$ there is a numerical constant $\CC_1' = \CC_1'(r)$ independent of the choice of the sequence $\mm'$ such that: if $r \in [1, \infty)$, then
		\begin{displaymath}
		\frac{1}{\CC_1'} \, \Big( \sum_{j=1}^{l-1} (m_j')^{-r} \Big)^{1/r} \leq \cc(1,1,r,\SSS') \leq \CC_1' \Big( \sum_{j=1}^{l-1} (m_j')^{-r} \Big)^{1/r},
		\end{displaymath} 
		and, if $r = \infty$, then
		\begin{displaymath}
		\frac{1}{\CC_1'} \leq \cc(1,1,r,\SSS') \leq \CC_1' .
		\end{displaymath} 		
	\end{lemma}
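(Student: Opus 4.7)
As in Lemma~\ref{L1}, the proof splits into a lower bound coming from a single test function and an upper bound obtained by reducing to indicator functions via Remark~\ref{R3}. The chief technical input is the explicit description of the balls listed just before the statement, combined with condition~\eqref{3}, which ensures geometric separation between consecutive scales.

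\textbf{Lower bound.} Take $f_0 := \chi_{\{x_0\}}$, so that $\|f_0\|_{1,1} = 1$. Using the ball formulas directly: $\MM f_0(x_0) = 1$; for $x_{j,k} \in S_j \setminus \tilde{S}_j$ the supremum is attained on the ball of radius in $(1,2]$, yielding $\MM f_0(x_{j,k}) = 1/(1 + |\tilde{S}_{j-1}| + 2^{h_j})$; for $x_{j,k} \in \tilde{S}_j$ it equals $1/(1 + |\tilde{S}_{j-1}| + 2^{h_j} + 2^{h_{j+1}})$; the contribution from the ball of radius $>2$ is always dominated. Condition~\eqref{3} gives $|\tilde{S}_{j-1}| < \lfloor 2^{h_j}/m'_j \rfloor$, so these two values are of order $2^{-h_j}$ and $2^{-h_{j+1}}$, respectively, and the set on which $\MM f_0$ is of order $2^{-h_j}$, namely $\tilde{S}_{j-1} \cup (S_j \setminus \tilde{S}_j)$, has cardinality of order $2^{h_j}/m'_j$. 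A dyadic summation in the formula for $\|\cdot\|_{1,r}$ then produces $\|\MM f_0\|_{1,r}^r \approx \sum_{j=1}^{l-1}(1/m'_j)^r + O(1)$ for $r<\infty$, where the $O(1)$ accounts for the top level $S_l$ (whose points all carry value of order $2^{-h_l}$ because $S_{l+1} = \emptyset$) and is absorbed using $m'_1 = 1$; for $r = \infty$ the analogous supremum is of order $1$. This yields the lower bound in each case.

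\textbf{Upper bound.} By Remark~\ref{R3} it is enough to bound $\|\MM \chi_A\|_{1,r}$ in terms of $|A|$. Write $\MM \chi_A \leq \chi_A + \MM_{\rm loc}\chi_A + \MM_{\rm glob}\chi_A$, where $\MM_{\rm loc}$ uses radii in $(1,2]$ and $\MM_{\rm glob}\chi_A \equiv |A|/|S|$; the summands $\chi_A$ and $\MM_{\rm glob}\chi_A$ each have $L^{1,r}$-quasinorm bounded by $|A|$. For the local part decompose $A = (A \cap \{x_0\}) \cup \bigcup_{j=1}^{l} A_j$ with $A_j := A \cap S_j$, and use sublinearity together with the quasi-triangle inequality for $\|\cdot\|_{1,r}$. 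The contribution of $\chi_{\{x_0\}}$ is exactly the quantity computed for the lower bound, hence at most $C\,(\sum_{j=1}^{l-1}(1/m'_j)^r)^{1/r}$. For each $A_j$, splitting further according to $A_j \cap (S_j \setminus \tilde{S}_j)$ and $A_j \cap \tilde{S}_j$ and invoking the ball formulas shows that $\MM_{\rm loc}\chi_{A_j}$ is supported on $O(1)$ consecutive levels, with values of order $|A_j|\cdot 2^{-h_i}$ on sets of size of order $2^{h_i}/m'_i$ for $i$ in this range; summing gives $\|\MM_{\rm loc}\chi_{A_j}\|_{1,r} \leq C |A_j|/m'_j \leq C |A_j|$. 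Collecting everything and using that $\sum_j (m'_j)^{-r} \geq 1$ (because $m'_1 = 1$) yields $\|\MM \chi_A\|_{1,r} \leq C |A| \cdot (\sum_{j=1}^{l-1}(1/m'_j)^r)^{1/r}$, as required; the case $r = \infty$ is handled identically, with suprema in place of dyadic sums.

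\textbf{Main obstacle.} The delicate step is the level-by-level propagation analysis for $\MM_{\rm loc}\chi_{A_j}$: since points in $\tilde{S}_j$ see one level further up than points in $S_j \setminus \tilde{S}_j$, mass from $A_j$ can reach both directions along the chain, and one has to carefully track at which level it lands and in what amount. Condition~\eqref{3} is what ensures that the resulting dyadic contributions form a geometrically decaying sequence and collapse to the single term $|A_j|/m'_j$; once this bookkeeping is done, the three sources (the singleton at $x_0$, the level sets $A_j$, and the global ball) assemble cleanly into the claimed bound.
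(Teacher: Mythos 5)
Your lower bound is correct and is exactly the computation the paper leaves to the reader: test on $f_0=\chi_{\{x_0\}}$, read off the values of $\MM_{\SSS'}f_0$ from the ball formulas, use \eqref{3} to count the points at each dyadic level, and sum. The problem is the very first step of your upper bound. Remark~\ref{R3} is stated, and is only valid, for $p\in(1,\infty)$, whereas Lemma~\ref{L2} lives at $p=1$. The reduction that Remark~\ref{R3} encodes --- a bound on characteristic functions extends to all of $L^{p,1}$ with a universal constant --- proceeds by decomposing $f$ into a countable combination of indicators and summing in the target quasi-norm; for $p=1$ and $r>1$ (in particular $r=\infty$) the space $L^{1,r}$ is not normable and countable subadditivity of $\|\cdot\|_{1,r}$ fails with uniform constants, so the reduction is not available. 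Hence ``by Remark~\ref{R3} it is enough to bound $\|\MM\chi_A\|_{1,r}$'' is unjustified, and as written your upper bound covers only indicator functions.

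The gap is easy to close, and the fix is precisely what the paper does: at $p=1$ no reduction to indicators is needed. For any $f\geq 0$ with $\|f\|_1=1$ and any $x\neq x_0$, the average of $f$ over the only nontrivial local ball satisfies $|B(x,3/2)|^{-1}\int_{B(x,3/2)}f\,{\rm d}\mu\leq \|f\|_1/|B(x,3/2)|=|B(x,3/2)|^{-1}$. Thus $\MM_{\SSS'}f\leq\max\{f,\MM_0 f,\MM_{\rm glob}f\}$ where $\MM_0 f(x):=|B(x,3/2)|^{-1}$ on $S\setminus\{x_0\}$ is a single fixed function independent of $f$, and its $L^{1,r}$ quasi-norm is computed directly from the distribution function (using \eqref{3} and the monotonicity of $(h_j)$) to be at most $C\big(\sum_{j=1}^{l-1}(m_j')^{-r}\big)^{1/r}$, with the obvious analogue for $r=\infty$. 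This one-line pointwise bound also renders your level-by-level propagation analysis for $\MM_{\rm loc}\chi_{A_j}$ unnecessary: that bookkeeping is essentially sound for indicators, but it is both more complicated than the paper's argument and insufficient on its own without a valid passage from indicators to general $f$.
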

	
	\begin{proof}
		Fix $r \in [1, \infty]$. First we estimate $\cc(1,1,r,\SSS')$ from above. 
		It is worth mentioning that if $r \in [1, \infty)$, then $m_1' = 1$ implies $\big( \sum_{j=1}^{l-1} (m_j')^{-r} \big)^{1/r} \geq 1$. We take $f \in L^1(\SSS')$ with $\|f\|_1 = 1$. One can easily check that
		\begin{displaymath}
		\MM_{\SSS'} f \leq \max \{f, \MM_0 f, \MM_{\rm glob} f\},
		\end{displaymath}
		where $\MM_0 f(x_0) := 0$ and $\MM_0 f(x) := |B(x,3/2)|^{-1}$
		for $x \in S \setminus \{x_0\}$, while $\MM_{\rm glob} f(x) := |S|^{-1}$ is a constant function. Therefore, we can write
		\begin{displaymath}
		\|\MM_{\SSS'} f\|_{1,r} \leq C \, \big( \|f\|_{1,r} +  \| \MM_0 f\|_{1,r} + \| \MM_{\rm glob} f\|_{1,r} \big).
		\end{displaymath}
		Obviously, we have $\|f\|_{1,r} \leq C \|f\|_1 = C$. As before, we can also prove that $\| \MM_{\rm glob} f\|_{1,r} \leq C \|f\|_1$. Thus, it remains to estimate $\| \MM_0 f\|_{1,r}$. By using \eqref{3} and the fact that $(h_1, \dots, h_l)$ is increasing we obtain 
		\begin{displaymath}
		d_{\MM_0 f} \big( 2^{i} \big) \leq \left\{ \begin{array}{rl}
		0 & \textrm{for } i \geq -h_1, \\
		C 2^{h_j} (m_j')^{-1} & \textrm{for } -h_{j+1} \leq i < -h_{j}, \ j = 1, \dots, l-1,  \\
		C 2^{h_l} & \textrm{for } i < -h_l, \end{array} \right.
		\end{displaymath} 
		which implies that
		\begin{displaymath}
		\| \MM_0 f\|_{1,r} \leq C \, \Big(  \sum_{i \in \ZZ} \big( d_{\MM_0 f} ( 2^{i} ) \big)^{r} \, 2^{ir} 
		\Big) ^{1/r} \leq C \,\Big( \sum_{j=1}^{l-1} (m_j')^{-r} \Big)^{1/r}
		\end{displaymath}
		in the case $r \in [1, \infty)$, and
		\begin{displaymath}
		\| \MM_0 f\|_{1,r} \leq C \, \sup_{i \in \ZZ} d_{\MM_0 f} \big( 2^{i} \big) \, 2^{i} \leq C
		\end{displaymath}
		in the case $r = \infty$. Finally, to obtain the reverse inequality from the thesis	it suffices to take $f_0 := \chi_{\{x_0\}}$ and calculate $\| \MM_{\SSS'} f_0 \|_{1,r}$. Again we omit the details.
	\end{proof}
	
	\section{Test spaces of second type} \label{S6}
	
	The aim of this section is to construct spaces $\XX$ such that, given an admissible triple $(p_0,q_0,r_0)$ with $q_0 \in (1, \infty]$, there exists a significant difference between the
	behavior of $\MM_{\TT}$ considered as an operator acting from $L^{p_0,q_0}(\XX)$ to $L^{p_0,r_0}(\XX)$ and from $L^{p_0,1}(\XX)$ to $L^{p_0,r_0}(\XX)$, respectively. 
	
	Let us begin with the following observation. Each space introduced in Section~\ref{S5} had one central point, namely $x_0$, and the function $\chi_{\{x_0\}}$ played the main role in estimating the size of $\cc(p,q,r,\SSS)$ (or $\cc(p,q,r,\SSS')$, respectively). Since the values $\|\chi_{\{x_0\}} \|_{p_0,1}$ and $\|\chi_{\{x_0\}} \|_{p_0,q_0}$ are comparable, we are now forced to change the strategy and introduce test spaces of another type, say $\TT$, for which the size of $\cc(p_0,q_0,r_0,\TT)$ will be calculated by testing the action of the associated maximal operator on certain more complicated functions. This can be done if we ensure that our space will have more central points grouped into several different types. The detailed analysis will be made separately for the following two cases: $1 < q_0 \leq r_0 < \infty$ and $1 < q_0 < r_0 = \infty$. We omit the case $q_0 = r_0 = \infty$.
	
	\subsection{Test spaces of second type for $\bf r < \infty$} \label{S6.1}
	
	Let $(p, q, r)$ be a fixed admissible triple with $1 < q \leq r < \infty$ and take $l \in \NN$. Associate with the quadruple $(p, q, r, l)$ four sequences of positive integers, $(m_i)_{i=1}^l$, $(h_i)_{i=1}^l$, $(\alpha_i)_{i=1}^l$, and $(\beta_i)_{i=1}^l$, with the following properties:
	\begin{enumerate}[label=(\roman*)]
		\item \label{i} $h_{i+1} / h_i \in \NN$,
		\item \label{ii} $m_{i+1} \geq 2 m_i h_i$,
		\item \label{iii} $1 \leq m_{i}^{1-p}h_i <2$,
		\item \label{iv} $l^{\frac{p}{(p-1)r}} \alpha_1 \geq 2 m_l h_l$,
		\item \label{v} $\alpha_{i+1} \geq 2 \alpha_i \beta_i$,
		\item \label{vi} $1 \leq \alpha_{i}^{1-p} \beta_i h_i <2$.	
	\end{enumerate}
	
	The sequences introduced above will determine the structure of the test space constructed in this section. Let us emphasize that the properties \ref{i}--\ref{vi} can be met simultaneously. Indeed, assume $h_1 = m_1 = 1$ and choose $m_2 \geq 2 m_1 h_1$ such that the set $\{h \in \NN : 1 \leq m_{2}^{1-p}h < 2 \}$ contains at least $h_1$ elements. Thus, it is possible to take $h_2$ for which the conditions $h_2 / h_1 \in \NN$ and $1 \leq m_{2}^{1-p}h_2 <2$ are satisfied simultaneously. Continue this way until the whole sequences $(m_i)_{i=1}^l$ and $(h_i)_{i=1}^l$ are chosen. Next, assume that $\alpha_1$ satisfies $l^{\frac{p}{(p-1)r}} \alpha_1 \geq 2 m_l h_l$ and $\alpha_1^{1-p} h_1 <2$. Take $\beta_1$ such that $1 \leq \alpha_{1}^{1-p} \beta_1 h_1 <2$. Choose $\alpha_{2} \geq 2 \alpha_1 \beta_1$ such that $\alpha_2^{1-p} h_2 < 2$ and take $\beta_2$ satisfying $1 \leq \alpha_{2}^{1-p} \beta_2 h_2 <2$. Continue this way until the whole sequences $(\alpha_i)_{i=1}^l$ and $(\beta_i)_{i=1}^l$ are chosen.
	
	Now we formulate a few thoughts that one should keep in mind later on:
	\begin{itemize}
		\item the sequences $(m_i)_{i=1}^l$ and $(\alpha_i)_{i=1}^l$ are used to define the associated measure, while $(h_i)_{i=1}^l$ and $(\beta_i)_{i=1}^l$ help to describe the number of elements of a given type,
		\item the property \ref{i} allows the set of points of a given type to be divisible into the appropriate number of equinumerous subsets,
		\item the properties \ref{i} and \ref{v} say that the sequences $(m_i)_{i=1}^l$ and $(\alpha_i)_{i=1}^l$ grow very fast; this fact results in large differences between the masses of points of different types, which in turn simplifies many calculations regarding the distribution function,
		\item the properties \ref{iii} and \ref{vi} are of rather technical nature; they are responsible for the balance between the number of points of a given type and the mass of each one of them,
		\item the property \ref{iv} says that the values $\alpha_1, \dots, \alpha_l$ are relatively large compared with $m_1, \dots, m_l$ and $h_1, \dots, h_l$ which makes that the points from the upper level (see Figure~\ref{F2}) have much greater masses than the ones from the lower level,
		\item the property \ref{iv} is the only property involving the parameter $l$.
	\end{itemize}
	
	We are ready to construct a test space of second type $\TT = \TT_{p,q,r,l} = (T, \rho, \mu)$. Set
	\begin{displaymath}
	T := \{x_{i,j}, \, x^\circ_{i,k} : i=1, \dots, l, \, j=1, \dots , h_i, \, k=1, \dots, h_i \beta_i\},
	\end{displaymath}
	where all elements $x_{i,j}, \, x^\circ_{i,k}$ are pairwise different. We use some auxiliary symbols for certain subsets of $T$: 
	\begin{displaymath}
	T^\circ := \{x^\circ_{i,k} : i=1, \dots, l, \, k=1, \dots, h_i \beta_i\},
	\end{displaymath}	
	for $i = 1, \dots, l$,
		\begin{displaymath}
		T_{i} := \{x_{i,j} : j=1, \dots, h_i \}, \quad
		T^\circ_{i} := \{x^\circ_{i,k} : k=1, \dots, h_i \beta_i\},
		\end{displaymath}
	and, for $1 \leq i \leq i^* \leq l$, $j=1, \dots, h_i$,  
	\begin{displaymath}
	T^\circ_{i^*,i,j} := \Big\{x^\circ_{i^*,k} : k \in \Big(\frac{j-1}{h_i} h_{i^*} \beta_{i^*} , \frac{j}{h_i} h_{i^*} \beta_{i^*} \Big]\Big\}.
	\end{displaymath}
	Observe that the sets $T^\circ_{i^*,i,j}$, $j = 1, \dots ,h_i$, are pairwise disjoint, each of them contains exactly $h_{i^*} \beta_{i^*}  / h_i$ elements (here the property \ref{i} was used) and $\bigcup_{j=1}^{h_i} T^\circ_{i^*,i,j} = T^\circ_{i^*}$. 
	
	We introduce $\mu$ by letting $|\{ x_{i,j} \}| := m_i$ and $|\{ x^\circ_{i,k} \}| := l^{\frac{p}{(p-1)r}} \alpha_i$. Note that, in view of \ref{ii}, \ref{iv}, and \ref{v}, the measure $\mu$ satisfies the following inequalities: for each $x \in T^\circ$,
	\begin{displaymath}
	|\{x\}| > |T \setminus T^\circ|,
	\end{displaymath}
	and, for each $1 \leq i < i^* \leq l$, $x_1 \in T_{i^*}$, and $x_2 \in T^\circ_{i^*}$,  
	\begin{displaymath}
	|\{x_1\}| > |T_1 \cup \ldots \cup T_i|, \quad  |\{x_2\}| > |T^\circ_1 \cup \ldots \cup T^\circ_i|.
	\end{displaymath}
	
	Finally, we define the metric $\rho$ on $T$ determining the distance between two different elements $x, y \in T$ by the formula
	\begin{displaymath}
	\rho(x,y) := \left\{ \begin{array}{rl}
	1 & \textrm{if } \{x, y\} = \{x_{i,j},x^\circ_{i^*,k}\} \textrm{ and } x^\circ_{i^*,k} \in T^\circ_{i^*,i,j},  \\
	2 & \textrm{otherwise.} \end{array} \right. 
	\end{displaymath}
	It is worth noting here that for each $1 \leq i \leq i^* \leq l$ and $x \in T^\circ_{i^*}$ there is exactly one point $y \in T_i$ such that $\rho(x,y)=1$.
	 
	Figure~\ref{F2} shows a model of the space $(T, \rho)$ for $l = 2$, $h_1 = 1$, and $h_2 = 2$.
	
	\begin{figure}[H] 
		\begin{tikzpicture}
		[scale=.8,auto=left,every node/.style={circle,fill,inner sep=2pt}]
		
		\node[label={[yshift=-1cm]$x_{1,1}$}] (c0) at (6,1) {};
		\node[label=$x^\circ_{1,1}$] (c1) at (5,4)  {};
		\node[label={[yshift=-0.1cm]$x^\circ_{1,\beta_1}$}] (c2) at (7,4)  {};
		\node[dots] (c3) at (6,4)  {...};
		
		\node[label={[yshift=-1cm]$x_{2,1}$}] (l0) at (10,1) {};
		\node[label=$x^\circ_{2,1}$] (l1) at (9,4)  {};
		\node[label={[xshift=-0.15cm, yshift=-0.1cm]$x^\circ_{2,\beta_2}$}] (l2) at (11,4)  {};
		\node[dots] (l3) at (10,4)  {...};
		
		\node[label={[yshift=-1cm]$x_{2,2}$}] (r0) at (14,1) {};
		\node[label={[xshift=0.1cm, yshift=-0.25cm]$x^\circ_{2,\beta_2+1}$}] (r1) at (13,4)  {};
		\node[label={[xshift=0.2cm, yshift=-0.2cm]$x^\circ_{2,2\beta_2}$}] (r2) at (15,4)  {};
		\node[dots] (r3) at (14,4)  {...};
		
		\foreach \from/\to in {l0/l1, l0/l2, c0/c1, c0/c2, r0/r1, r0/r2, l1/c0, l2/c0, r1/c0, r2/c0}
		\draw (\from) -- (\to);
		\end{tikzpicture}
		\caption{The model of the space $(T, \rho)$ for $l = 2$, $h_1 = 1$, and $h_2 = 2$.}
		\label{F2}
	\end{figure}
	As before, we will explicitly describe any ball: for $i=1, \dots , l, \, j=1, \dots , h_i$,
	\begin{displaymath}
	B(x_{i,j},s) = \left\{ \begin{array}{rl}
	\{x_{i,j}\} & \textrm{for } 0 < s \leq 1, \\
	\{x_{i,j}\} \cup \bigcup_{i^* \geq i} T_{i^*,i,j}& \textrm{for } 1 < s \leq 2,  \\
	T & \textrm{for } 2 < s, \end{array} \right.
	\end{displaymath} 
	\noindent and, for $i^*=1, \dots , l, \, k=1, \dots, h_i \beta_i$,
	\begin{displaymath}
	B(x^\circ_{i^*,k},s) = \left\{ \begin{array}{rl}
	\{x^\circ_{i^*,k}\} & \textrm{for } 0 < s \leq 1, \\
	\{x^\circ_{i^*,k}\} \cup \{x_{i, j} : x^\circ_{i^*,k} \in T_{i^*,i,j}\}& \textrm{for } 1 < s \leq 2,  \\
	T & \textrm{for } 2 < s. \end{array} \right.
	\end{displaymath}
	
	The following lemma describes the behavior of $\cc(p,1,r,\TT)$ and $\cc(p,q,r,\TT)$.
	
		\begin{lemma} \label{L3}
			Fix an admissible triple $(p,q,r)$ with $1 < q \leq r < \infty$ and take $l \in \NN$. Let $\TT = \TT_{p,q,r,l}$ be the metric measure space defined as above. Then there is a numerical constant $\CC_2 = \CC_2(p,q,r)$ independent of $l$ such that
			\begin{displaymath}
			 \cc(p,1,r,\TT) \leq \CC_2
			\end{displaymath} 
		and
			\begin{displaymath}
			\cc(p,q,r,\TT) \geq \frac{1}{\CC_2} l^{1-1/q}.
			\end{displaymath} 		
		\end{lemma}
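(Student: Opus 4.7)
\textbf{Upper bound for $\cc(p,1,r,\TT)$.} By Remark~\ref{R3}, it suffices to verify the inequality $\|\MM_\TT \chi_A\|_{p,r} \leq \CC_2 \,|A|^{1/p}$ uniformly in $l$ for every Borel set $A \subset T$. The plan is to split pointwise
\begin{displaymath}
\MM_\TT \chi_A \leq \max\{\chi_A,\,\MM_{\rm loc}\chi_A,\,\MM_{\rm glob}\chi_A\},
\end{displaymath}
where $\MM_{\rm loc}$ is associated with balls of radius in $(1,2]$ and $\MM_{\rm glob}$ with balls of radius greater than $2$ (the whole space). The contribution of $\chi_A$ is bounded trivially. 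The contribution of $\MM_{\rm glob}\chi_A = |A|/|T|$ is handled exactly as in Proposition~\ref{P} and gives at most $C|A|^{1/p}$. The main work is in $\MM_{\rm loc}\chi_A$, and here I would analyse the two types of centres separately: at an upper-level point $x = x^\circ_{i^*,k}$ one has $|B(x,3/2)|\approx l^{\frac{p}{(p-1)r}}\alpha_{i^*}$ (dominated by the single point mass by \ref{iv} and \ref{v}), so the average is close to $1$ only when $x\in A$, and otherwise bounded by $\sum_{i\leq i^*}m_i/(l^{\frac{p}{(p-1)r}}\alpha_{i^*})$; at a lower-level point $x = x_{i,j}$ the ball measure is dominated by $|T^\circ_{l,i,j}|$ and the numerator is controlled by the $A$-masses picked up in each $T^\circ_{i^*,i,j}$. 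Summing over thresholds $t$ and point types, the growth conditions \ref{ii}, \ref{v} collapse each geometric sum onto its largest term, and one obtains $d_{\MM_{\rm loc}\chi_A}(t)^{1/p}\cdot t \leq C\,|A|^{1/p}$ after integrating against $\mathrm{d}t/t$ on the relevant dyadic scales.

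\textbf{Lower bound for $\cc(p,q,r,\TT)$.} The goal is to exhibit a test function $f$ realising the $l^{1-1/q}$ gap, so I would take a balanced sum of the form
\begin{displaymath}
f = \sum_{k=1}^{l} c_k\,\chi_{E_k}, \qquad c_k = |E_k|^{-1/p},
\end{displaymath}
with $E_k$ pairwise disjoint and $|E_k|$ arranged in geometric progression. For any such balanced sum one has $\|f\|_{p,q}\approx l^{1/q}$ while $\|f\|_{p,1}\approx l$, so the desired ratio is built in at the level of Lorentz norms. The sets $E_k$ are to be taken inside the upper cells of the top level, e.g.\ as nested differences $T^\circ_{l,k,1}\setminus T^\circ_{l,k+1,1}$, so that for every lower point $x_{k,1}$ the ball $B(x_{k,1},3/2)$ intersects exactly the $E_{k'}$ with $k'\geq k$ and the mass $|\{x_{k,1}\}|$ is negligible by \ref{iv}. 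Property \ref{vi} is what makes the numerator and denominator of $\MM_\TT f(x_{k,1})$ comparable to $c_k$, and hence forces an extra $l$ copies of the threshold $c_k$ to appear in $\MM_\TT f$ across the $l$ distinct $x_{k,1}$'s and their associated cells. Assembling the distribution function of $\MM_\TT f$ from the contributions on $\bigcup_k E_k$ and on the row $\{x_{k,1}:k=1,\dots,l\}$, one obtains $\|\MM_\TT f\|_{p,r}\gtrsim l$ times the base scale, giving the required $l^{1-1/q}$ ratio.

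\textbf{Main obstacle.} The upper bound is essentially a careful bookkeeping exercise once the decomposition into $\MM_{\rm loc}$ and $\MM_{\rm glob}$ is fixed, and the heavy lifting is done by the mass-dominance conditions \ref{ii}, \ref{iv}, \ref{v}. The genuinely delicate part is the lower bound: one must choose the $E_k$ so that the amplification of $\MM_\TT f$ at lower-level points contributes a full $l$-fold enhancement of the $L^{p,r}$ norm (rather than the trivial $l^{1/r}$), and it is precisely the presence of the factor $l^{\frac{p}{(p-1)r}}$ in \ref{iv}—the only condition that depends on $l$—that calibrates the masses of upper points so that this amplification occurs uniformly across all $l$ scales. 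Verifying this exact numerical balance between the parameters $m_i$, $h_i$, $\alpha_i$, $\beta_i$ and making the distribution-function estimates of $\MM_\TT f$ precise is the main technical hurdle.
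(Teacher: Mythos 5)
Your upper-bound argument follows the same strategy as the paper (reduce to indicators via Remark~\ref{R3}, split off the global average as in Proposition~\ref{P}, and control the local averages at the two types of centres using the mass-dominance conditions), and although it is only a sketch it is pointed in the right direction; the paper additionally reduces, via sublinearity, to $E \subset T^\circ$ or $E \subset T_i$ for a single $i$, and then sums over $i$ with a Minkowski-type argument split according to whether $r \geq p$ or $r < p$ -- a reduction you would need to make explicit.

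The lower bound, however, contains a genuine error: your construction is the reverse of the one that works, and the amplification you describe does not occur. You place the test function $f=\sum_k c_k\chi_{E_k}$ on upper-level points (subsets of $T^\circ_l$) and hope to harvest the gain at the lower-level points $x_{k,1}$. But the quantity that enters $\|\MM_\TT f\|_{p,r}$ at threshold $c_k$ is the \emph{measure} of the level set, and the point $x_{k,1}$ contributes only its mass $m_k$, which by \ref{iv} is negligible compared with $|E_k|$ itself (every single upper-level point outweighs all of $T\setminus T^\circ$). Since $\MM_\TT f(x_{k,1})$ is at best comparable to $c_k$, the value $f$ already attains on the much heavier set $E_k$, one gets $d_{\MM_\TT f}(t)\approx d_f(t)$ and hence $\|\MM_\TT f\|_{p,r}\approx\|f\|_{p,r}\leq\|f\|_{p,q}$: no gain at all, let alone the factor $l$. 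The paper's construction goes the other way: $g=\sum_{i=1}^l m_i^{-1}\chi_{T_i}$ lives on the light lower-level points (so $\|g\|_{p,q}\leq C\,l^{1/q}$ by \ref{ii} and \ref{iii}), and the maximal function is evaluated at the numerous, heavy upper-level points: the ball of radius $3/2$ about $x^\circ_{i,k}$ contains exactly one point of each $T_{i^*}$ with $i^*\leq i$, each contributing $g(y)|\{y\}|=1$ to the numerator, so $\MM_\TT g(x^\circ_{i,k})\gtrsim i\,l^{\frac{p}{(1-p)r}}\alpha_i^{-1}$; the normalization \ref{vi} then makes level $i$ contribute about $i^r l^{-1}$ to $\|\MM_\TT g\|_{p,r}^r$, and summing over $i$ gives $\|\MM_\TT g\|_{p,r}\gtrsim l$. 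Note also that the full factor $l$ (rather than $l^{1/r}$) comes precisely from this linear-in-$i$ growth of the averages across levels, a mechanism absent from your construction.
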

		
		\begin{proof}
			First we estimate $\cc(p,1,r,\TT)$ from above. Let $f = \chi_E$ for some $\emptyset \neq E \subset T$. According to Remark~\ref{R3} it suffices to show that $\| \MM_\TT f \|_{p,r} \leq C \|f\|_{p,1}$ holds with some constant $C$ depending only on $p$ and $r$. Note that, by using sublinearity of $\MM_\TT$, we can assume that either $E \subset T^\circ$ or $E \subset T \setminus T^\circ$. 
			
			First we consider the case $E \subset T^\circ$. One can easily check that
			\begin{displaymath}
			\MM_{\TT} f \leq \max \{f, \chi_{T \setminus T^\circ}, |E| / |T|\}.
			\end{displaymath}
			Then the desired estimate follows easily from the fact that $|T \setminus T^\circ| < |E|$.
			
			Let us now consider the case $E \subset T \setminus T^\circ$. We have
			\begin{displaymath}
			\MM_{\TT} f \leq \max \{f, \MM_0 f, |E| / |T|\},
			\end{displaymath}
			where
			\begin{displaymath}
			\MM_0 f(x) := \chi_{T^\circ}(x) \, \frac{|E \cup B(x, 3/2)|}{|B(x, 3/2)|}.  
			\end{displaymath}
			It suffices to prove the estimate 
			\begin{equation}\label{4}
			\|\MM_0 f\|_{p,r}^p \leq C
			\|f\|_{p,1}^p = C |E|.
			\end{equation}
			Suppose for a moment that \eqref{4} holds for each $E$ satisfying $E \subset T_i$ for some $i \in \{1, \dots, l\}$. Then, for arbitrary $E \subset T \setminus T^\circ$, we define $E_i = E \cap T_i$ and $f_i = \chi_{E_i}$, $i = 1, \dots, l$, and observe that by \ref{ii}
			\begin{displaymath}
			\MM_0 f(x) \leq 2 \, \max_{i \in \{ 1, \dots, l\} } \MM_0 f_i(x)  
			\end{displaymath}
			for each $x \in T$. Therefore, for each $t>0$ we have $d_{\MM_0 f}(t) \leq \sum_{i=1}^l d_{\MM_0 f_i}(t/2)$ and hence
			\begin{displaymath}
			\|\MM_0 f \|_{p,r}^p \leq C \, \Big( \sum_{n \in \ZZ} 2^{nr} \big( 
			\sum_{i=1}^l d_{\MM_0 f_i} (2^n) \big)^{r/p}  
			\Big)^{p/r}.
			\end{displaymath}
			If $r \geq p$, then by Minkowski's inequality
			\begin{align*}
				\Big( \sum_{n \in \ZZ} 2^{nr} \big( 
				\sum_{i=1}^l d_{\MM_0 f_i} (2^n) \big)^{r/p}  
				\Big)^{p/r} \leq \sum_{i=1}^l  \Big( \sum_{n \in \ZZ} 2^{nr} \big( d_{\MM_0 f_i} (2^n) \big)^{r/p}  
				\Big)^{p/r} \leq
				C \sum_{i=1}^l |E_i| = C \|f\|_{p,1}^p.
			\end{align*}
			On the other hand, if $r < p$, then
			\begin{align*}
			\Big( \sum_{n \in \ZZ} 2^{nr} \big( 
			\sum_{i=1}^l d_{\MM_0 f_i} (2^n) \big)^{r/p}  
			\Big)^{p/r} \leq 
			\Big(  \sum_{i=1}^l \sum_{n \in \ZZ} 2^{nr}\big( d_{\MM_0 f_i} (2^n) \big)^{r/p} 
			\Big)^{p/r} 
			\leq C \Big( \sum_{i=1}^l |E_i|^{r/p} \Big)^{p/r} 
			\end{align*}
			and, in view of \ref{ii}, the last expression is estimated by $C |E|$ .
			
			Now we return to the proof of \eqref{4} for $E \subset T_i$. Suppose that $E$ consists of $\gamma$ elements for some $\gamma \in \{1, \dots, h_i\}$. Then for each $x \in T^\circ_{i^*}$, $i^* < i$, we have $\MM_0 f(x) = 0$. On the other hand, for each $i^* \geq i$ we have exactly $\gamma h_{i^*} \beta_{i^*} / h_i$ elements $x \in T^\circ_{i^*}$ for which
			\begin{displaymath}
			0 < \MM_0 f(x) \leq  l^{\frac{p}{(1-p)r}} m_i \alpha_{i^*}^{-1},
			\end{displaymath}
			while for the remaining points $x \in T^\circ_{i^*}$ we have $\MM_0 f(x) = 0$. Thus, we obtain
			\begin{align*}
			\| \MM_0 f \|_{p,r}^r & \leq C \, \sum_{i^* = i}^l \big( l^{\frac{p}{(1-p)r}} m_i \alpha_{i^*}^{-1}\big)^r \, \big( l^{\frac{p}{(p-1)r}} \alpha_{i^*} \gamma h_{i^*} \beta_{i^*} h_{i}^{-1} 
			\big)^{r/p} \\ 
			& \leq C \, \gamma^{r/p} m_i^r h_i^{-r/p} l^{-1} \sum_{i^* = i}^l \big( \alpha_{i^*}^{1-p} \beta_{i^*} h_{i^*} \big)^{r/p},   
			\end{align*}
			which is bounded by $C \gamma^{r/p} m_i^{r/p} = C |E|^{r/p}$ in view of \ref{iii} and \ref{vi}, and the fact that the sum in the expression above has at most $l$ elements. 
			
				In the next step we estimate $\cc(p, q, r, \TT)$ from below. Let $g$ be defined by
				\begin{displaymath}
				g := \sum_{i=1}^l \frac{1}{m_i} \, \chi_{T_i}.
				\end{displaymath}
				Then, by using \ref{ii}, we have
				\begin{align*}
				\|g\|_{p,q} \leq C \, \Big( \sum_{i=1}^l m_i^{-q} |T_i|^{q/p}  \Big)^{1/q} = C \, \Big( \sum_{i=1}^l (m_i^{1-p} h_i)^{q/p}  \Big)^{1/q}
				\end{align*}
				and \ref{iii} implies that $\|g\|_{p,q} \leq C \, l^{1/q}$.
				
				Now we focus on $\MM_\TT g$. For each $x \in T^\circ_i$, $i \in \{1, \dots, l\}$, we have 
				\begin{displaymath}
				\MM_\TT g(x) \geq \frac{1}{|B(x,3/2)|} \, \sum_{y \in B(x,3/2)} g(y) \, |\{y\}|.
				\end{displaymath}
				Note that \ref{iv} implies $|B(x,3/2)| \leq 2 l^{\frac{p}{(p-1)r}} \alpha_i$ and, as a result, we obtain
				\begin{displaymath}
				\MM_\TT g(x) \geq \frac{1}{C} \, l^{\frac{p}{(1-p)r}} \alpha_i^{-1} \, \sum_{i^*=1}^{i} \frac{1}{m_i} \, m_i = \frac{i}{C} \, l^{\frac{p}{(1-p)r}} \alpha_i^{-1}.
				\end{displaymath}
				Therefore, by using \ref{v}, we have
				\begin{displaymath}
				\|\MM_\TT g\|_{p,r}^r \geq \frac{1}{C} \, \sum_{i=1}^l \big(  i \, l^{\frac{p}{(1-p)r}} \alpha_i^{-1} 
				\big)^{r} |T^\circ_i|^{r/p} = \frac{1}{C} \, l^{-1} \, \sum_{i=1}^l i^r \, \big(\alpha_i^{1-p} \beta_i h_i\big)^{r/p} .
				\end{displaymath}
				In view of \ref{vi} each element of the series above is bigger than $i^r$ and hence
				\begin{displaymath}
				\|\MM_\TT g\|_{p,r} \geq \frac{1}{C} \, l^{-1/r} \, \big( \sum_{i=1}^l i^r \big)^{1/r} \geq \frac{1}{C} \, l^{-1/r + (r+1)/r} = \frac{l}{C}.
				\end{displaymath}
				Finally, the estimates for $\|g\|_{p,q}$ and $\|\MM_\TT g\|_{p,r}$ imply that $\cc(p,q,r,\TT) \geq l^{1-1/q} /C$.					
		\end{proof}
		
	\subsection{Test spaces of second type for $\bf r = \infty$} \label{S6.2}
	The argument described above needs only a few minor modifications to cover the second case under consideration. Let $(p,q,r)$ be a fixed admissible triple with $1 < q < r = \infty$ and take $l \in \NN$. Now we associate with $(p,q,r,l)$ a large constant $\alpha$ and three sequences of positive integers, $(m'_i)_{i=1}^l$, $(h'_i)_{i=1}^l$, and $(\beta'_i)_{i=1}^l$, with the following properties:
	\begin{enumerate}[label=(\subscript{\rm \roman*}{'})]
		\item \label{i'} $h'_{i+1} / h'_i \in \NN$,
		\item \label{ii'} $m'_{i+1} \geq 2 m'_i h'_i$,
		\item \label{iii'} $1 \leq (m'_i)^{1-p}h'_i < 2$,
		\item \label{iv'} $\alpha \geq 2 m'_l h'_l$,
		\item \label{v'} $j^{p-2} \leq \alpha^{1-p} \beta'_j h'_j \leq 2j^{p-2}$.
	\end{enumerate}
	Notice that the properties \ref{i'}--\ref{v'} can be met simultaneously. 
	
	We construct a space $\TT' = \TT'_{p,q,l} = (T, \rho, \mu)$ as follows. The set $T$ and the metric $\rho$ are defined as before with an aid of $(h'_i)_{i=1}^l$ and $(\beta'_i)_{i=1}^l$ instead of $(h_i)_{i=1}^l$ and $(\beta_i)_{i=1}^l$, respectively. Then we introduce $\mu$ by letting $|\{ x_{i,j} \}| := m'_i$ and $|\{ x_{i,k} \}| := i \alpha$.
	 
	 The following lemma describes the behavior of $\cc(p,1,\infty,\TT')$ and $\cc(p,q,\infty,\TT')$.
	 
	 \begin{lemma} \label{L4}
	 	Fix an admissible triple $(p,q,\infty)$ with $q \in (1, \infty)$ and take $l \in \NN$. Let $\TT'=\TT'_{p,q,l}$ be the metric measure space defined as above. Then there is a numerical constant $\CC'_2 = \CC'_2(p,q)$ independent of $l$ such that
	 	\begin{displaymath}
	 	\cc(p,1,\infty,\TT') \leq \CC'_2
	 	\end{displaymath} 
	 	and
	 	\begin{displaymath}
	 	\cc(p,q,\infty,\TT') \geq \frac{1}{\CC'_2} l^{1-1/q}.
	 	\end{displaymath} 		
	 \end{lemma}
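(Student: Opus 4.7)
The argument I would give is a direct adaptation of the proof of Lemma~\ref{L3}, the only structural change being that the $L^{p,\infty}$-quasinorm on the target side is now controlled via a single supremum rather than an $\ell^r$-sum. The balancing imposed by \ref{v'}, namely that $\alpha^{1-p}\beta'_i h'_i$ is comparable to $i^{p-2}$, is precisely calibrated so that the relevant distributional estimate collapses to an $l$-independent bound in the upper half of the lemma, while still producing $|T^\circ|$ comparable to $\alpha^p l^p$, which drives the lower bound.

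For the estimate $\cc(p,1,\infty,\TT') \leq \CC'_2$, I would invoke Remark~\ref{R3} and reduce to $f = \chi_E$. The case $E \subset T^\circ$ is immediate from $\MM_{\TT'} f \leq \max\{f, \chi_{T\setminus T^\circ}, |E|/|T|\}$ together with $|T \setminus T^\circ| < |E|$. For $E \subset T \setminus T^\circ$, I define $\MM_0 f(x) := \chi_{T^\circ}(x)\,|E \cap B(x, 3/2)|/|B(x, 3/2)|$ and decompose $f = \sum_i \chi_{E_i}$ with $E_i := E \cap T_i$. Property~\ref{ii'} gives $\MM_0 f \leq C \max_i \MM_0 \chi_{E_i}$, and the union bound $d_{\MM_0 f}(t) \leq \sum_i d_{\MM_0 \chi_{E_i}}(t/C)$ then reduces the task to showing $\|\MM_0 \chi_{E_i}\|_{p,\infty}^p \leq C |E_i|$ for each $i$ separately. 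For a fixed $E_i \subset T_i$ with $|E_i| = \gamma$, the key computation is that on $T^\circ_{i^*}$ (for $i^* \geq i$) the function $\MM_0 \chi_{E_i}$ takes values comparable to $m'_i/(i^*\alpha)$ on exactly $\gamma h'_{i^*}\beta'_{i^*}/h'_i$ points, each of measure $i^*\alpha$; summing these contributions for $i^{**}$ between $i$ and $i^*$ and applying \ref{v'}, one obtains
\[
 d_{\MM_0 \chi_{E_i}}\!\bigl(m'_i/(i^*\alpha)\bigr) \;\leq\; C\,\gamma\,\alpha^p (i^*)^p / h'_i,
\]
so that $t\,d_{\MM_0 \chi_{E_i}}(t)^{1/p}$ is uniformly bounded by $C\,(m'_i/h'_i{}^{1/p})\,\gamma^{1/p}$; a final invocation of \ref{iii'} collapses this to $C(m'_i\gamma)^{1/p} = C|E_i|^{1/p}$, as required.

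For the lower bound on $\cc(p,q,\infty,\TT')$ I would use the same test function as in Lemma~\ref{L3}, namely $g := \sum_{i=1}^l (1/m'_i)\chi_{T_i}$; property~\ref{iii'} again yields $\|g\|_{p,q} \leq C\, l^{1/q}$. Property~\ref{iv'} ensures that $|B(x,3/2)| \leq 2 i^* \alpha$ for $x \in T^\circ_{i^*}$, and this ball meets each $T_i$ with $i \leq i^*$ in exactly one point, so $\MM_{\TT'} g(x) \geq i^*/(2 i^*\alpha) = 1/(2\alpha)$ uniformly on $T^\circ$. Since \ref{v'} gives $|T^\circ| = \sum_{i=1}^l i\alpha\, h'_i \beta'_i$ comparable to $\alpha^p\sum_{i=1}^l i^{p-1}$, hence comparable to $\alpha^p l^p$, the threshold $t$ just below $1/(2\alpha)$ witnesses $\|\MM_{\TT'} g\|_{p,\infty} \geq (1/C)\, l$, and dividing by $\|g\|_{p,q}$ yields $\cc(p,q,\infty,\TT') \geq l^{1-1/q}/C$.

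The principal subtlety, exactly as in Lemma~\ref{L3}, lies in the reduction of the upper bound from a general $E$ to the single-level case, which is driven by the geometric-type growth in \ref{ii'}; however the $r=\infty$ setting is actually cleaner than the $r<\infty$ setting, because the union-bound inequality for distribution functions allows a unified treatment without the case split $r \geq p$ versus $r < p$ that appears in the proof of Lemma~\ref{L3}.
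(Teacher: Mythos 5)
Your proposal is correct and follows essentially the same route as the paper's own (sketched) proof: the upper bound reduces via Remark~\ref{R3} and property \ref{ii'} to the single-level estimate $\|\MM_0\chi_{E}\|_{p,\infty}^p\leq C|E|$ for $E\subset T_i$, established by exactly the distributional computation at the thresholds $m'_i/(i^*\alpha)$ using \ref{iii'} and \ref{v'}, and the lower bound uses the same test function $g=\sum_i(1/m'_i)\chi_{T_i}$ with $\MM_{\TT'}g\geq(2\alpha)^{-1}$ on $T^\circ$ and $|T^\circ|\gtrsim\alpha^p l^p$. You also correctly supply the detail the paper omits, namely that the union bound for distribution functions makes the reduction to single levels cleaner at $r=\infty$ than the $r\geq p$ versus $r<p$ split needed in Lemma~\ref{L3}.
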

	 
	 \begin{proof}
	 	We present only a sketch of the proof. First we want to estimate $\cc(p,1,\infty,\TT')$ from above. The main step here, as in the proof of Lemma~\ref{L3}, is to obtain
	 	\begin{displaymath}
	 	\| \MM_0  \chi_E \|_{p, \infty}^p \leq C \, \|\chi_E\|_{p,1}^p = C |E|,
	 	\end{displaymath}
	 	for $E \subset T_i$, $i \in \{1, \dots, l\}$, where
	 	\begin{displaymath}
	 	\MM_0 \chi_E(x) := \chi_{T^\circ}(x) \, \frac{|E \cup B(x, 3/2)|}{|B(x, 3/2)|}.  
	 	\end{displaymath}
	 	Suppose that $E$ consists of $\gamma$ elements for some $\gamma \in \{1, \dots, h'_i\}$. Then for each $x \in T^\circ_{i^*}$, $i^* < i$, we have $\MM_0 f(x) = 0$. On the other hand, for each $i^* \geq i$ we have exactly $\gamma h'_{i^*} \beta'_{i^*} / h'_i$ elements $x \in T^\circ_{i^*}$ for which
	 	\begin{displaymath}
	 	0 < \MM_0 f(x) \leq  m'_i \, (i^*)^{-1} \, \alpha^{-1},
	 	\end{displaymath}
	 	while for the remaining points $x \in T^\circ_{i^*}$ we have $\MM_0 f(x) = 0$. Thus, for each $i^* \geq i$,
	 	\begin{align*}
	 	\Big( \frac{m'_i}{i^* \alpha }\Big)^p \, \sum_{j=i}^{i^*} j \alpha \gamma \beta'_j h'_j (h'_1)^{-1} = \gamma (m'_i)^p (h'_i)^{-1} (i^*)^{-p}  \sum_{j=i}^{i^*} j \alpha^{1-p} \beta'_j h'_j, 
	 	\end{align*}
	 	which, in view of \ref{iii'} and \ref{v'}, is bounded by
	 	\begin{displaymath}
	 	C \gamma m'_i (i^*)^{-p}  \sum_{j=i}^{i^*} j^{p-1} \leq C \gamma m'_i = C |E|.
	 	\end{displaymath}
	 	
	 	Now we estimate $\cc(p, q, \infty, \TT')$ from below. Let $g$ be defined by
	 	\begin{displaymath}
	 			g := \sum_{i=1}^l \frac{1}{m'_i} \, \chi_{T_i}.
	 	\end{displaymath}
	 	Then, by using \ref{ii'} and \ref{iii'}, we have
	 	\begin{align*}
	 	\|g\|_{p,q} \leq C \, \Big( \sum_{i=1}^l (m'_i)^{-q} |T_i|^{q/p}  \Big)^{1/q} = C \, \Big( \sum_{i=1}^l \big( (m'_i)^{1-p} h'_i \big)^{q/p}  \Big)^{1/q} \leq C \, l^{1/q}.
	 	\end{align*}
	 	Observe that, by using \ref{iv'}, for each $x \in T^\circ_i$, $i \in \{1, \dots, l\}$, we have $\MM_{\TT'} g(x) \geq (2 \alpha)^{-1}$. Thus, in view of \ref{v'}, we obtain
	 	\begin{displaymath}
	 	\|\MM_{\TT'} g\|_{p,\infty}^p \geq \frac{1}{C} \, \alpha^{-p} \, |T^\circ| = \frac{1}{C} \, \sum_{i=1}^l \alpha^{1-p} j \beta'_j h'_j \geq \frac{1}{C} \, \sum_{i=1}^l j^{p-1} \geq \frac{l^p}{C}.
	 	\end{displaymath}
	 	Finally, the estimates for $\|g\|_{p,q}$ and $\|\MM_{\TT'} g\|_{p,\infty}$ imply that $\cc(p,q,\infty,\TT') \geq l^{1-1/q} /C$.    
	 \end{proof}
		
	\section{Proofs of Theorem~\ref{T1} and Theorem~\ref{T2}} \label{S7}
	
	Having introduced two types of test spaces, we are ready to prove Theorem~\ref{T1} and Theorem~\ref{T2}. We emphasize here that all spaces $\UU$, $\VV$ and $\YY$ that appear in the proofs are constructed by using Proposition~\ref{P} and hence they are non-doubling in view of the remark made at the end of Section~\ref{S4}.
	
	\begin{proof1*}
	Let $(p_0, q_0, r_0)$ be a fixed admissible triple. We consider four cases depending on the values of $p_0$ and $r_0$.
	
	\smallskip \noindent \bf Case~1: \rm $p_0 \in (1, \infty)$ and $r_0 \in [1, \infty)$. First we obtain a space $\UU$ such that $\cc(p_0,q_0,r, \UU ) = \infty$ for $r \in [q_0, r_0]$, while $\cc(p_0,q_0,r,\UU) < \infty$ for $r \in (r_0, \infty]$. Define $(a_i )_{i \in \NN}$ by the formula $a_i :=  2^{i(p_0-1)} i^{-p_0/r_0}$ and let $i_0 \in \NN$ be the first index such that $a_{i_0+1} \geq 1$ and $(a_i)_{ i = i_0+1}^\infty$ is non-decreasing. Thus, the sequence $(\bar{a}_i)_ {i \in \NN}$ defined by
	\begin{displaymath}
	\bar{a}_i := \left\{ \begin{array}{rl}
	1 & \textrm{for } 1 \leq i \leq i_0, \\
	\lceil a_i \rceil & \textrm{for } i > i_0, \end{array} \right.
	\end{displaymath}
	is also non-decreasing (here the symbol ${\lceil} \cdot {\rceil}$ refers to the ceiling function). Then, for any $n \in \NN$ let $\SSS_n$ be the test space $\SSS_\mm$ from Section~\ref{S5.1} with $l = n$ and $\mm = (\bar{a}_1, \dots, \bar{a}_n )$. We let $\UU$ be the space obtained by using Proposition~\ref{P} for the family $\{ \SSS_n : n \in \NN\}$. It is not hard to see that $\UU$ satisfies the desired properties. Indeed, fix $r \in [1, \infty)$. By using Lemma~\ref{L1} we have that $\cc(p_0,q_0,r,\UU)$ is comparable to 
	\begin{displaymath}
	\sup_{n \in \NN} \Big( \sum_{i=1}^{n} 2^{ir(-1 + 1/p_0)} \, \bar{a}_i^{r/p_0}    \Big)^{1/r},
	\end{displaymath}
	which, in turn, is comparable to
	\begin{equation}\label{5}
	\sup_{n \in \NN, \, n > i_0} \Big( \sum_{i=1}^{i_0} 2^{jr(-1 + 1/p_0)} +  \sum_{i=i_0+1}^{n}  i^{-r/r_0} \Big)^{1/r}.
	\end{equation}
	We can easily see that the second series in \eqref{5} tends to $\infty$ as $n \rightarrow \infty$ if and only if $r \leq r_0$. 
	
	Finally, a slight modification of the above argument allows us to get a space $\VV$ such that $\cc(p_0,q_0,r,\VV) = \infty$ for $r \in [q_0, r_0)$, while $\cc(p_0,q_0,r,\VV) < \infty$ for $r \in [r_0, \infty]$. Namely, instead of $(a_i)_{ i \in \NN }$ we will use a family of sequences $ \{ ( a_i^{(n)} )_{ i \in \NN} : n \in \NN \}$, where $a_i^{(n)} := a_i \, \log(n+3)^{-p_0/r_0}$. Then for each $n \in \NN$ we build $(\bar{a}_i^{(n)})_{ i \in \NN}$ in such a way as before with an aid of $(a_i^{(n)})_{ i \in \NN}$ and the critical index $i_0^{(n)}$. After all, we let $\SSS_n$ be the test space $\SSS_\mm$ with $l = n$ and $\mm = ( \bar{a}_1^{(n)}, \dots, \bar{a}_n^{(n)})$. It is clear that $\cc(p_0,q_0,r_0,\VV)$ is estimated from above by
	\begin{displaymath}
	C \, \sup_{n \in \NN} \Big( \sum_{i=1}^{\infty} 2^{jr_0(-1 + 1/p_0)} +  \sum_{i=1}^{n}  i^{-1} \, \log(n+3)^{-1} \Big)^{1/r_0} < \infty,
	\end{displaymath}
	and thus $\cc(p_0,q_0,r,\VV) < \infty$ for $r \in [r_0, \infty]$. Now we take $r \in [q_0, r_0)$. Since for every $n \in \NN$ the sequence $ ( a_i^{(n)})_{ i = i_0 +1}^\infty$ is non-decreasing, we have $ \bar{a}_i^{(n)} \geq a_i^{(n)}$ for every $i > i_0$ and $n \in \NN$. 
	Thus, we can estimate $\cc(p_0,q_0,r,\VV)$ from below by
	\begin{displaymath}
	\frac{1}{C} \, \sup_{n \in \NN \setminus \{1, \dots, i_0\}} \Big(\sum_{i=i_0+1}^{n}  i^{-r/r_0} \, \log(n+3)^{-r/r_0} \Big)^{1/r_0},
	\end{displaymath}
	which in fact is equal to $\infty$.
	
	\smallskip \noindent \bf Case~2: \rm $p_0 \in (1, \infty)$ and $r_0 = \infty$. Let $(b_i)_{ i \in \NN}$ be a non-decreasing sequence of positive integers such that $ \lim_{i \rightarrow \infty} 2^{i(-1+1/p_0)} b_i^{1/p_0} =  \infty$. Then for each $n \in \NN$ we let $\SSS_n$ be the test space $\SSS_\mm$ with $l = n$ and $\mm = (b_1, \dots, b_n )$. Finally, we let $\UU$ be the space obtained by using Proposition~\ref{P} for the family $\{ \SSS_n : n \in \NN\}$. It is routine to check that $\cc(p_0,q_0,r,\UU) = \infty$ holds for every $r \in [q_0, \infty]$.
	
	In order to obtain $\VV$ such that $\cc(p_0,q_0,r,\VV) = \infty$ for $r \in [q_0, \infty)$, while $\cc(p_0,q_0,\infty,\VV) < \infty$ we use some kind of diagonal argument. Namely, consider a sequence $(r^{(i)})_{i \in \NN}$ such that $r^{(i)} \in [1, \infty)$, $i \in \NN$, and $\lim_{i \rightarrow \infty} r^{(i)} = \infty$. Then for each $i \in \NN$ let $(\SSS_n^i)_{n \in \NN}$ be the sequence of test spaces $\SSS_\mm$ used in Case~1 to build $\UU$ for $r_0 = r^{(i)}$. Now we construct $\VV$ by using Proposition~\ref{P} for the whole family $\{\SSS_n^i : n, \, i \in \NN  \}$. For every $r < \infty$ there is $i_0 \in \NN$ such that $r^{(i_0)} > r$, which implies that $\cc(p_0,q_0,r,\VV)$ is estimated from below by
	\begin{displaymath}
	\sup_{n \in \NN} \cc(p_0,q_0,r,\SSS_n^{i_0})  = \infty.
	\end{displaymath}
	On the other hand, it is not hard to see that for each $n \in \NN$ and $i \in \NN$ we have $\cc(p_0,q_0,\infty,\SSS_n^{i}) \leq C$, which implies that $\cc(p_0,q_0,\infty,\VV) < \infty$.  
	
	\smallskip \noindent \bf Case~3: \rm $p_0 = 1$ and $r_0 \in [1, \infty)$. First we obtain a space $\UU$ such that $\cc(1,1,r, \UU ) = \infty$ for $r \in [1, r_0]$, while $\cc(1,1,r,\UU) < \infty$ for $r \in (r_0, \infty]$. Consider the non-decreasing sequence $(c_i)_{ i \in \NN}$ defined by the formula $c_i := \lfloor i^{1/r_0} \rfloor$. Then, for any $n \in \NN$ let $\SSS'_n$ be the test space $\SSS'_{\mm'}$ from Section~\ref{S5.2} with $l = n$ and $\mm' = (c_1, \dots, c_n)$. We let $\UU$ be the space obtained by using Proposition~\ref{P} for the family $\{ \SSS'_n : n \in \NN\}$. Again, it is not hard to see that $\UU$ satisfies the desired properties. Indeed, fix $r \in [1, \infty)$. By using Lemma~\ref{L2} we have that $\cc(1,1,r,\UU)$ is comparable to 
	$\sup_{n \in \NN} \big( \sum_{i=1}^{n} i^{-r/r_0} \big)^{1/r}$ which is equal to $\infty$ if and only if $r \leq r_0$. 
	
	Now we build $\VV$ such that $\cc(1,1,r,\VV) = \infty$ for $r \in [1, r_0)$, while $\cc(1,1,r,\VV) < \infty$  for $r \in (r_0, \infty]$. For each $n \in \NN$ let $ ( c_i^{(n)})_{ i \in \NN}$ be defined by $ c_i^{(n)} := \lfloor i^{1/r_0} \log(n+3)^{1/r_0} \rfloor$. We let $\SSS'_n$ be the test space $\SSS'_{\mm'}$ with $n = l$ and $\mm' = ( c_1^{(n)}, \dots, c_n^{(n)} )$. Then we construct $\VV$ by using Proposition~\ref{P} for the family $\{ \SSS_n' : n \in \NN\}$. By using Lemma~\ref{L2}, for each fixed $r \in [1, \infty)$ we obtain that $\cc(1,1,r,\VV)$ is comparable to 
	$\sup_{n \in \NN} \big( \sum_{i=1}^{n} i^{-r/r_0} \, \log(n+3)^{-r/r_0} \big)^{1/r}$ which is equal to $\infty$ if and only if $r < r_0$. 
	
	\smallskip \noindent \bf Case~4: \rm $p_0 = 1$ and $r_0 = \infty$. In order to obtain $\UU$ such that $\cc(1,1,r,\UU) = \infty$ for every $r \in [1, \infty]$ we proceed as in Case~2. Namely, we choose a non-decreasing sequence of positive integers $(d_i)_{ i \in \NN}$ such that $ \lim_{i \rightarrow \infty} d_i =  \infty$. Next, for any $n \in \NN$ we let $\SSS_n$ be the test space $\SSS_\mm$ with $l = n$ and $\mm = (d_1, \dots, d_n )$. Then we denote by $\UU$ the space obtained by using Proposition~\ref{P} for the family $\{ \SSS_n : n \in \NN \}$. By using Lemma~\ref{L1} we conclude that $\UU$ satisfies the desired properties.
	
	Finally, we can obtain $\VV$ such that $\cc(1,1,r,\VV) = \infty$ for $r \in [1, \infty)$, while $\cc(1,1,\infty,\VV) < \infty$ by using the family of spaces $\{ \SSS^{'i}_n : n \in \NN, \, i \in \NN \}$ introduced similarly as in Case~2, but this time with an aid of the properly chosen test spaces $\SSS'_{\mm'}$ used in Case~3. We skip the details here. $\raggedright \hfill \qed$ 	   
	\end{proof1*}
	
	\begin{proof2*}
Let $(p_0,q_0,r_0)$ be a fixed admissible triple with $q_0 \in (1, \infty)$ (we omit the case $q_0 = \infty$ since the thesis is the stronger the smaller value of $q_0$ is). Consider the case $r_0 \in [q_0, \infty)$. For each $n \in \NN$ let $\TT_n$ be the test space $\TT_{p,q,r,l}$ from Section~\ref{S6.1} with $(p,q,r) = (p_0,q_0,r_0)$ and $l = n$. Then we let $\YY$ be the space obtained by using Proposition~\ref{P} for the family $\{ \TT_n : n \in\NN \}$. By using Lemma~\ref{L3} we conclude that $\cc(p_0,1,r_0,\YY) < \infty$, while $\cc(p_0, q_0, r_0, \YY) = \infty$. On the other hand, if $r_0 = \infty$, then we let $\TT'_n$ be the test space $\TT'_{p,q,l}$ from Section~\ref{S6.2} with $(p,q) = (p_0,q_0)$ and $l = n$. Finally, we construct $\YY$ by using Proposition~\ref{P} for the family $\{ \TT'_n : n \in\NN \}$ and the thesis can be deduced from Lemma~\ref{L4}.
	$\raggedright \hfill \qed$
	\end{proof2*}
	
	We conclude our studies with a short comment. Theorem~\ref{T1} describes the situation in which the maximal operator acts on a single Lorentz space $L^{p_0,q_0}(\XX)$. The main concept behind Theorem~\ref{T2}, in turn, is to show some differences between the actions of $\MM_\XX$ on $L^{p_0,q_1}(\XX)$ and $L^{p_0,q_2}(\XX)$, respectively, for some $p_0 \in (1, \infty)$ and $1 \leq q_1 < q_2 \leq \infty$. However, in view of Remark~\ref{R3}, this task is much easier if $q_1 = 1$ and this is the only case in which this problem has been solved here. Thus, a natural question arises. Is it possible to construct $\XX$ for which there is a significant difference between the actions of $\MM_\XX$ on $L^{p_0,q_1}(\XX)$ and $L^{p_0,q_2}(\XX)$, respectively, if $p_0 \in (1, \infty)$ and $1 < q_1 < q_2 \leq \infty$? We announce that the answer is affirmative and, in fact, it is a simple consequence of the results stated in the author's forthcoming paper, where a slightly more general problem involving the actions of $\MM_\XX$ on $L^{p_0,q}(\XX)$ for fixed $p_0 \in (1, \infty)$ and varying $q \in [1, \infty]$ is considered.
	 
	\section*{Acknowledgements}
	I would like to express my deep gratitude to Professor Krzysztof Stempak
	for his suggestion to study the problem discussed in this article. I also thank him for insightful comments and continuous help during the preparation of the paper.
	
	Research was supported by the National Science Centre of Poland, project no. \linebreak
	2016/21/N/ST1/01496.

\end{document}